\numberwithin{equation}{section} 
\newcommand{\bea}{\begin{eqnarray}}
\newcommand{\eea}{\end{eqnarray}}
\newcommand{\ba}{\begin{array}}
\newcommand{\ea}{\end{array}}
\newcommand{\edc}{\end{document}}
\newcommand{\bc}{\begin{center}}
\newcommand{\ec}{\end{center}}
\newcommand{\be}{\begin{equation}}
\newcommand{\ee}{\end{equation}}
\newcommand{\dsf}{\displaystyle\frac}
\def\cb{{\mathcal B}}
\def\ce{{\mathcal E}}
\def\cf{{\mathcal F}}
\def\cg{{\mathcal G}}
\def\cn{{\mathcal N}}
\def\bc{{\mathbb C}}
\def\bn{{\mathbb N}}
\def\bq{{\mathbb Q}}
\def\br{{\mathbb R}}
\def\bz{{\mathbb Z}}
\def\g{\gamma}  \def\G{\Gamma}
  \def\D{\Delta}
\def\l{\lambda} 
\def\m{\mu}
\def\n{\nu}
\def\s{\sigma}
\def\w{\omega} \def\Om{\Omega}
\def\h{{\mathbf{h}}}
\def\sb{{\mathbf{s}}}
\newtheorem{thm}{Theorem}[section]
\newtheorem{lem}[thm]{Lemma}
\newtheorem{cor}[thm]{Corollary}
\newtheorem{conj}[thm]{Conjecture}
\newtheorem{prop}[thm]{Proposition}
\theoremstyle{remark}
\newtheorem{rem}{Remark}[section]
\begin{document}

\title[$p$-adic Ising-Vannimenus model]
{On $P$-adic Ising-Vannimenus model on an arbitrary order Cayley
tree}


\author{Farrukh Mukhamedov}
\address{Farrukh Mukhamedov\\
 Department of Computational \& Theoretical Sciences\\
Faculty of Science, International Islamic University Malaysia\\
P.O. Box, 141, 25710, Kuantan\\
Pahang, Malaysia} \email{{\tt far75m@yandex.ru} {\tt farrukh\_m@iium.edu.my}}
\author{ Mansoor Saburov}
\address{ Mansoor Saburov\\
Department of Computational \& Theoretical Sciences\\
Faculty of Science, International Islamic University Malaysia\\
P.O. Box, 141, 25710, Kuantan\\
Pahang, Malaysia} \email{{\tt msaburovm@gmail.com}}
\author{ Otabek Khakimov}
\address{ Otabek Khakimov\\
Institute of mathematics, 29, Do'rmon Yo'li str., 100125,
Tashkent, Uzbekistan.} \email {{\tt hakimovo@mail.ru}}

\begin{abstract}
In this paper, we continue an investigation of the $p$-adic
Ising-Vannimenus model on the Cayley tree of an arbitrary order
$k$ $(k\geq 2$). We prove the existence of $p$-adic quasi Gibbs
measures by analyzing fixed points of multi-dimensional $p$-adic
system of equations. We are also able to show the uniqueness of
translation-invariant $p$-adic Gibbs measure. Finally, it is
established the existence of the phase transition for the
Ising-Vannimenus model depending on the order $k$ of the Cayley
tree and the prime $p$.
 Note that the methods used in the
paper are not valid in the real setting, since all of them are based
on $p$-adic analysis and $p$-adic probability measures.

\vskip 0.3cm \noindent {\it
Mathematics Subject Classification}: 82B26, 46S10, 12J12, 39A70, 47H10, 60K35.\\
{\it Key words}: $p$-adic numbers, Ising-Vanniminus model; $p$-adic
Gibbs measure, phase transition, Cayley tree.
\end{abstract}

\maketitle

\section{introduction}

At the first place, J. Vanniminus started to study the Ising model
with nearest-neighbor and next-nearest-neighbor interactions on
the Cayley tree of order two in the paper \cite{V}. The similar
results were numerically obtained for an arbitrary order Cayley
tree in \cite{GATU}.
 The $p$-adic counterpart of the
Ising-Vanniminus model on the Cayley tree of order two was first
studied in  \cite{MFDMAH}. There, it was proposed a
measure-theoretical approach to investigate the model in the
$p$-adic setting. The proposed methods have been  based on
$p$-adic probability measures. In this paper we consider the
Ising-Vanniminus model on an arbitrary order Cayley tree in the
$p$-adic setting \footnote{Note that $p$-adic numbers provide a
more exact and more adequate description of microworld phenomena.
Therefore, there are many papers are devoted to the description of
various models in the language of $p$-adic analysis (see for
example
\cite{AKS,ADV,ADFV},\cite{FO},\cite{Kh1,Kh2,KK2,KK3},\cite{MP},\cite{V1,V2,VVZ}).}.
Note that the Cayley tree or Bethe lattices (see \cite{Ost}) were
fruitfully used, providing a deeper insight into the behavior of
the models. Moreover, they will provide more information about the
models defined on complex networks \cite{DGM}.

In the present paper we use the methods based on $p$-adic
probability measures. We point out that the $p$-adic numbers
appeared in quantum physics models (see \cite{BC}) such a way the
model is described by $p$-adic probability measures. There are
also many models in physics cannot be described using ordinary
Kolmogorov's probability theory (see \cite{Kh2,Ko,MP,VVZ}). These
papers stimulated the development of the $p$-adic probability
models \cite{BD},\cite{Kh1,K3,KYR,Ro}. Moreover, an abstract
non-Archimedean measure theory developed in \cite{KL,Lu} which
laid on the base of the theory of stochastic processes with values
in $p$-adic and more general non-Archimedean fields. Hence, this
theory of stochastic processes allowed us to construct wide
classes of processes using finite dimensional probability
distributions. Therefore, in \cite{GRR,M13,M15,MR1,RKhak} it has
been developed a $p$-adic statistical mechanics models with
nearest neighbor interactions, based on the theory of $p$-adic
probability measures and processes.  Namely, we have studied
$p$-adic Ising and Potts models with nearest neighbor interactions
on Cayley trees. Note that there are also several $p$-adic models
of complex hierarchic systems \cite{KK2,KK3}. We remark that one
of the central problems of such a theory is the study of
infinite-volume Gibbs measures corresponding to a given
Hamiltonian, and a description of the set of such measures. In
most cases such an analysis depend on a specific properties of
Hamiltonian, and complete description is often a difficult
problem. This problem, in particular, relates to a phase
transition of the model (see \cite{G}).

In this paper, we consider a $p$-adic analogue of the model
\cite{V} on the Cayley tree, i.e. such a model has nearest
neighbor and next nearest neighbor interactions. Here, we continue
an investigation of the $p$-adic Ising-Vanniminus model on the
Cayley tree of an arbitrary order. We are going to use a
measure-theoretical approach proposed in \cite{MFDMAH}. In the
previous study, the order of the Cayley tree being two was
essentially useful. In this study, we are going to apply different
techniques which will allow us to prove the uniqueness of
translation-invariant $p$-adic Gibbs measure. This partially
confirms the conjecture formulated in \cite{MFDMAH} to be true.
Moreover, we will also show the existence of the phase transition
depending on the order $k$ of the Cayley tree and prime $p$.
 We point out
that in the $p$-adic setting there are several kinds of phase
transitions such as \textit{strong phase transition},
\textit{phase transition} (see \cite{M13,M14}). Here, by the
\textit{phase transition} we mean the existence of at least two
non-trivial $p$-adic quasi Gibbs measures such that one is bounded
and the second one is unbounded (note that in the $p$-adic
probability, unlike to real setting, the probability measures
could be even unbounded \cite{Ro}). The reader should refer to
\cite{DKKV} for the recent development of the subject.

The present paper is organized as follows.  In section 2, we
recall some necessary results from the $p$-adic analysis. In
Section 3, we recall definitions of the Ising-Vanniminus model and
corresponding $p$-adic quasi Gibbs measures via interacting
functions. Note that such kind of measures exist if the
interacting functions satisfy multi-dimensional recurrence
equations. In section 4, the existence of $p$-adic quasi Gibbs
measures is established in terms of the order of the tree $k$ and
the prime $p$. To prove such an existence result we are going to
investigate fixed points of a multi-dimensional functional
equation. In Section 5, we will show that the
translation-invariant $p$-adic Gibbs measure is unique. In
particularly, the obtained result confirms the conjecture
formulated in \cite{MFDMAH} to be true. In the final section, we
are able to establish the existence of the phase transition for
the model depending on the order $k$ of the Cayley tree and the
prime $p$.  Note that the values of the norms (i.e. "absolute
values" ), in the $p$-adic setting, are discrete, therefore it is
impossible to determine exact values of a critical point for the
existence of the phase transition. Moreover, in the field of
$p$-adic numbers there is no reasonable order compatible with the
usual order in the rational numbers, and this rises some
difficulties in the direction of determination of exact critical
points. We stress that the methods used in the paper are not valid
in the real setting, since all of them are based on $p$-adic
analysis and $p$-adic probability measures.

\section{Preliminaries}

\subsection{$p$-adic numbers}

In what follows $p$ will be a fixed prime number. The set $\bq_p$ is
defined as a completion of the rational numbers $\bq$ with respect
to the norm $|\cdot|_p:\bq\to\br$ given by
\begin{eqnarray}
|x|_p=\left\{
\begin{array}{c}
  p^{-r} \ x\neq 0,\\
  0,\ \quad x=0,
\end{array}
\right.
\end{eqnarray}
here, $x=p^r\frac{m}{n}$ with $r,m\in\bz,$ $n\in\bn$,
$(m,p)=(n,p)=1$. The absolute value $|\cdot|_p$ is non-Archimedean,
meaning that it satisfies the strong triangle inequality $|x + y|_p
\leq \max\{|x|_p, |y|_p\}$. We recall a nice property of the norm,
i.e. if $|x|_p>|y|_p$ then $|x+y|_p=|x|_p$. Note that this is a
crucial property which is proper to the non-Archimedenity of the
norm.

Any $p$-adic number $x\in\bq_p$, $x\neq 0$ can be uniquely represented in the form
\begin{equation}\label{canonic}
x=p^{\g(x)}(x_0+x_1p+x_2p^2+...),
\end{equation}
where $\g=\g(x)\in\bz$ and $x_j$ are integers, $0\leq x_j\leq p-1$,
$x_0>0$, $j=0,1,2,\dots$ In this case $|x|_p=p^{-\g(x)}$.

We recall that an integer $a\in \bz$ is called {\it the $k^{th}$
residue modulo $p$} if the congruent equation $x^k\equiv a(\operatorname{mod })p$
has a solution $x\in \bz$.

By $\mathbb F_p=\bz/p\bz$ we denote a subgroup of $\bq_p$. Recall
that  $\sqrt[k]{-1}$ exists in $\mathbb F_p$ whenever $-1$ is the
$k^{th}$ residue of module $p$. Otherwise, it is said  that
$\sqrt[k]{-1}$ does not exist in $\mathbb F_p$. Let
$\cn_{k,p}(\mathbb{F}_p)$ be the number of solutions
$x^k\equiv-1(\operatorname{mod }p)$ in $\mathbb F_p$. It is known
\cite{Rosen} that  $\sqrt[k]{-1}$ exists in $\mathbb F_p$ if and
only if $\frac{p-1}{(k,p-1)}$ is even. Moreover, in this case, one
has that $\cn_{k,p}(\mathbb{F}_p)=(k,p-1)$. Similarly, we say
that $\sqrt[k]{-1}$ exists in $\mathbb Q_p$ whenever the equation
$x^k=-1$ is solvable in $\mathbb Q_p$. Otherwise, it is said  that $\sqrt[k]{-1}$
does not exist in $\mathbb Q_p$. It was shown \cite{MS} that
$\sqrt[k]{-1}$ exists in $\mathbb Q_p$ if and only if
$\sqrt[q]{-1}$ exists in $\mathbb F_p$, where $k=q\cdot p^s$ and
$(q,p)=1$ with $s\geq 0.$

For each $a\in \bq_p$, $r>0$ we denote $$ B(a,r)=\{x\in \bq_p :
|x-a|_p< r\}$$
and the set of all {\it $p$-adic integers}
$$\bz_{p}=\left\{ x\in \bq_{p}:\
|x|_{p}\leq1\right\}.$$
The set $\bz_p^*=\bz_p\setminus p\bz_p$ is called a set of $p$-adic units.

The following lemma is known as the Hensel's lemma
\begin{lem}\cite{AKP,I}\label{hl}
Let $\mathbf{x}=(x_1,\cdots,x_m),$ $\Theta=(0,\cdots,0)$ and $F(\mathbf{x})=(f_1(\mathbf{x}),\cdots, f_m(\mathbf{x}))$ be a polynomial function whose coefficients are $p$-adic integers. Let $\mathbb{J}(F(\mathbf{x}))$ be the Jacobian matrix of the function $F(\mathbf{x})$.
If there exists a vector $\mathbf{a}=(a_1,\cdots,a_m)$ with $p$-adic integer components such that
$$
F(\mathbf{a})\equiv \Theta \ ({mod} \ p)\ \ \mbox{and}\ \ \ det(\mathbb{J}(F(\mathbf{a})) \not\equiv0 \ (mod \ p)$$
then $F(\mathbf{x})$ has a unique root $\mathbf{x}_0$ with $p$-adic integer components which satisfies $\mathbf{x}_0\equiv \mathbf{a} \ (mod \ p)$.
\end{lem}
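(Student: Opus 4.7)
The plan is to prove this multivariate Hensel's lemma via Newton's iteration in $\bz_p^m$: construct a quadratically convergent sequence starting at $\mathbf{a}$, use completeness of $\bz_p$ to obtain a limit that zeros $F$, and then read uniqueness off the same quadratic estimate. The ultrametric inequality makes all the nontrivial estimates essentially automatic.

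First I would set $\mathbf{a}^{(0)}=\mathbf{a}$ and define recursively
$$\mathbf{a}^{(n+1)}=\mathbf{a}^{(n)}-\mathbb{J}(F(\mathbf{a}^{(n)}))^{-1}F(\mathbf{a}^{(n)}).$$
The hypothesis $\det(\mathbb{J}(F(\mathbf{a})))\not\equiv 0\pmod{p}$ says this determinant is a $p$-adic unit, so by the cofactor formula the inverse matrix lies in $M_m(\bz_p)$. An induction on $n$, using that every iterate is congruent to $\mathbf{a}$ modulo $p$, keeps $\det(\mathbb{J}(F(\mathbf{a}^{(n)})))$ a unit at every stage and hence the inversion legitimate throughout.

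Next I would establish quadratic convergence by expanding each polynomial $f_i$ about $\mathbf{a}^{(n)}$ via the Taylor formula:
$$F(\mathbf{a}^{(n+1)})=F(\mathbf{a}^{(n)})+\mathbb{J}(F(\mathbf{a}^{(n)}))\bigl(\mathbf{a}^{(n+1)}-\mathbf{a}^{(n)}\bigr)+R_n,$$
where $R_n$ is a polynomial in the components of $\mathbf{a}^{(n+1)}-\mathbf{a}^{(n)}$ of degree at least two, with coefficients in $\bz_p$. The first two terms cancel by construction, so the ultrametric inequality applied to the sup norm on $\bz_p^m$ yields $\|F(\mathbf{a}^{(n+1)})\|_p\le \|\mathbf{a}^{(n+1)}-\mathbf{a}^{(n)}\|_p^2\le \|F(\mathbf{a}^{(n)})\|_p^2$, where the second inequality uses that the inverse Jacobian has entries in $\bz_p$. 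From $\|F(\mathbf{a}^{(0)})\|_p\le p^{-1}$ this gives $\|F(\mathbf{a}^{(n)})\|_p\le p^{-2^n}$ and hence $\|\mathbf{a}^{(n+1)}-\mathbf{a}^{(n)}\|_p\to 0$. The non-Archimedean property then makes $\{\mathbf{a}^{(n)}\}$ Cauchy in the complete space $\bz_p^m$; its limit $\mathbf{x}_0\in\bz_p^m$ satisfies $F(\mathbf{x}_0)=\Theta$ by continuity and $\mathbf{x}_0\equiv\mathbf{a}\pmod{p}$ since every iterate is.

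For uniqueness, if $\mathbf{y}_0\in\bz_p^m$ is another zero of $F$ with $\mathbf{y}_0\equiv\mathbf{a}\pmod{p}$, then $\mathbf{d}=\mathbf{y}_0-\mathbf{x}_0$ satisfies $0=\mathbb{J}(F(\mathbf{x}_0))\mathbf{d}+S(\mathbf{d})$ with $\|S(\mathbf{d})\|_p\le \|\mathbf{d}\|_p^2$, and invertibility of $\mathbb{J}(F(\mathbf{x}_0))$ over $\bz_p$ (same mod-$p$ reduction) forces $\|\mathbf{d}\|_p\le \|\mathbf{d}\|_p^2$, hence $\mathbf{d}=\Theta$. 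The hard part of the proof is purely bookkeeping: keeping tight control on the multivariate Taylor remainder in the sup norm and verifying that the cofactor formula truly keeps the inverse Jacobian inside $M_m(\bz_p)$ at every iterate, so that no denominators spoil the $p$-adic integrality needed for the quadratic rate.
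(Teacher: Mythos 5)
The paper itself gives no proof of this lemma---it is quoted from \cite{AKP,I} as a known generalized Hensel's lemma---so there is no internal argument to compare against; your proposal supplies a self-contained proof, and it is the standard Newton-iteration argument, essentially correct. The three pillars all hold: the cofactor formula keeps $\mathbb{J}(F(\mathbf{a}^{(n)}))^{-1}$ in $M_m(\bz_p)$ because the determinant stays a unit (by the induction $\mathbf{a}^{(n)}\equiv\mathbf{a}\pmod p$); the multivariate Taylor expansion with cancellation of the constant and linear terms gives $\|F(\mathbf{a}^{(n+1)})\|_p\le\|\mathbf{a}^{(n+1)}-\mathbf{a}^{(n)}\|_p^2\le\|F(\mathbf{a}^{(n)})\|_p^2$, so completeness of $\bz_p^m$ under the ultrametric yields the limit root congruent to $\mathbf{a}$. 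Two small points you should make explicit rather than leave implicit: the Taylor remainder bound uses that each monomial in $R_n$ has $\bz_p$ coefficient and total degree at least two in the increment, so the sup-norm estimate is exactly the ultrametric inequality; and in the uniqueness step the inequality $\|\mathbf{d}\|_p\le\|\mathbf{d}\|_p^2$ only forces $\mathbf{d}=\Theta$ because $\|\mathbf{d}\|_p\le p^{-1}<1$, which holds precisely since both roots are congruent to $\mathbf{a}$ modulo $p$ (for $\|\mathbf{d}\|_p=1$ the inequality alone would not be a contradiction, and indeed the root is unique only within the residue class of $\mathbf{a}$). With these clauses inserted, your proof is complete and matches the argument one finds in the cited sources.
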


Recall that the $p$-adic logarithm is defined by the series
$$
\log_p(x)=\log_p(1+(x-1))=\sum_{n=1}^{\infty}(-1)^{n+1}\dsf{(x-1)^n}{n},
$$
which converges for every $x\in B(1,1)$. And the $p$-adic
exponential is defined by
$$
\exp_p(x)=\sum_{n=0}^{\infty}\dsf{x^n}{n!},
$$
which converges for every $x\in B(0,p^{-1/(p-1)})$.

\begin{lem}\label{21} \cite{Ko},\cite{VVZ} Let $x\in
B(0,p^{-1/(p-1)})$ then we have $$ |\exp_p(x)|_p=1,\ \ \
|\exp_p(x)-1|_p=|x|_p<1, \ \ |\log_p(1+x)|_p=|x|_p<p^{-1/(p-1)} $$
and $$ \log_p(\exp_p(x))=x, \ \ \exp_p(\log_p(1+x))=1+x. $$
\end{lem}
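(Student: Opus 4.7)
The plan is to reduce both parts of the lemma to term-by-term estimates in the defining series, and then combine the strong triangle inequality with the classical formal-power-series identities over $\bq$. Write $r := -\log_p|x|_p$, so the hypothesis $|x|_p < p^{-1/(p-1)}$ becomes $r > 1/(p-1)$.

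For $\exp_p$, I would first invoke Legendre's formula,
\[
v_p(n!) = \frac{n - s_p(n)}{p-1},
\]
where $s_p(n)$ denotes the sum of the base-$p$ digits of $n$. It then follows that
\[
\left|\frac{x^n}{n!}\right|_p = p^{-nr + (n-s_p(n))/(p-1)}.
\]
For $n \geq 2$, since $s_p(n) \geq 1$ one has $(n-1)r > (n-1)/(p-1) \geq (n-s_p(n))/(p-1)$, which is exactly the inequality $|x^n/n!|_p < |x|_p$. Hence the series defining $\exp_p(x)$ converges and by the strong triangle inequality the linear term dominates the tail, giving $|\exp_p(x)-1|_p = |x|_p$; in particular $|\exp_p(x)-1|_p < 1$, and consequently $|\exp_p(x)|_p = |1+(\exp_p(x)-1)|_p = 1$.

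For $\log_p(1+x) = \sum_{n\geq 1}(-1)^{n+1}x^n/n$ the same strategy applies. Here $|x^n/n|_p = |x|_p^n\, p^{v_p(n)}$, so it suffices to verify $v_p(n) \leq (n-1)/(p-1)$. Writing $n = p^{v_p(n)}m$ with $\gcd(m,p)=1$ reduces this to the elementary bound $p^k \geq k(p-1)+1$, which is immediate by induction on $k$. Combined with $r > 1/(p-1)$, this yields $|x^n/n|_p < |x|_p$ for every $n \geq 2$, so the $n=1$ term again dominates and $|\log_p(1+x)|_p = |x|_p$; the bound $|x|_p < p^{-1/(p-1)}$ is exactly the hypothesis.

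The compositional identities $\log_p(\exp_p(x)) = x$ and $\exp_p(\log_p(1+x)) = 1+x$ are classical identities between formal power series with coefficients in $\bq$. The estimates above guarantee that, under the stated hypothesis on $x$, every intermediate series arising in the formal composition converges in $\bq_p$ and lies in the appropriate disk of convergence of the outer series, so the formal identities upgrade to genuine functional identities on $B(0, p^{-1/(p-1)})$. I do not anticipate any serious obstacle; the only mildly delicate points are the arithmetic estimate $v_p(n) \leq (n-1)/(p-1)$, handled by the short induction indicated, and the verification that the strict inequalities above are preserved---which is ensured by the fact that the ball $B(0,p^{-1/(p-1)})$ is defined with a strict inequality in the paper's convention.
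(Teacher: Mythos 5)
Your proof is correct: the valuation estimates via Legendre's formula $v_p(n!)=(n-s_p(n))/(p-1)$ and the bound $v_p(n)\le (n-1)/(p-1)$, the domination of the linear term by the strong triangle inequality, and the upgrade of the formal compositional identities to functional ones on $B(0,p^{-1/(p-1)})$ constitute the standard argument for this lemma. The paper gives no proof of its own---Lemma \ref{21} is quoted from \cite{Ko} and \cite{VVZ}---and your argument is essentially the one found in those references, so there is nothing to add.
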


In what follows, we will use the following auxiliary facts.

\begin{lem}\cite{KMM}\label{pr} If $|a_i|_p\leq 1$, $|b_i|_p\leq 1$, $i=1,\dots,n$, then
\begin{equation*}
\bigg|\prod_{i=1}^{n}a_i-\prod_{i=1}^n b_i\bigg|_p\leq \max_{i\leq
i\leq n}\{|a_i-b_i|_p\}
\end{equation*}
\end{lem}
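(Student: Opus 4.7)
The plan is to prove this ultrametric product inequality by induction on $n$, relying on two facts recalled earlier in the excerpt: the strong triangle inequality $|x+y|_p\leq \max\{|x|_p,|y|_p\}$ and the multiplicativity of $|\cdot|_p$. No deep machinery is needed; the whole argument is a short induction with a telescoping identity.

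The base case $n=1$ is an equality. For the inductive step, assuming the inequality for $n-1$, I would write the familiar telescoping identity
\begin{equation*}
\prod_{i=1}^{n} a_i - \prod_{i=1}^{n} b_i = a_n\!\left(\prod_{i=1}^{n-1} a_i - \prod_{i=1}^{n-1} b_i\right) + (a_n - b_n)\prod_{i=1}^{n-1} b_i .
\end{equation*}
Applying the strong triangle inequality to this decomposition, together with $|a_n|_p\leq 1$ and $\bigl|\prod_{i=1}^{n-1} b_i\bigr|_p\leq 1$ (which follows from multiplicativity of the norm and the hypothesis $|b_i|_p\leq 1$), one obtains
\begin{equation*}
\left|\prod_{i=1}^{n} a_i - \prod_{i=1}^{n} b_i\right|_p \leq \max\left\{\left|\prod_{i=1}^{n-1} a_i - \prod_{i=1}^{n-1} b_i\right|_p,\ |a_n-b_n|_p\right\}.
\end{equation*}
The induction hypothesis bounds the first entry by $\max_{1\leq i\leq n-1}|a_i-b_i|_p$, and the overall maximum is then $\max_{1\leq i\leq n}|a_i-b_i|_p$, as required.

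There is no real obstacle here; the only point to verify is that the inductive hypothesis is applicable, and that is guaranteed because the sub-products $\prod_{i=1}^{n-1} a_i$ and $\prod_{i=1}^{n-1} b_i$ again have $p$-adic norm at most $1$, so the argument closes on itself. The statement and this proof are essentially standard folklore in non-Archimedean analysis, and will be used later in the paper when comparing interaction-function products in the recurrence equations for $p$-adic quasi Gibbs measures.
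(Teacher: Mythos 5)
Your proof is correct: the telescoping identity $\prod_{i=1}^{n}a_i-\prod_{i=1}^{n}b_i=a_n\bigl(\prod_{i=1}^{n-1}a_i-\prod_{i=1}^{n-1}b_i\bigr)+(a_n-b_n)\prod_{i=1}^{n-1}b_i$, combined with the strong triangle inequality and the fact that the sub-products stay in the unit ball, closes the induction without any gap. Note that the paper itself gives no proof of this lemma --- it is quoted with a citation to \cite{KMM} --- and your argument is exactly the standard one such a reference would contain, so there is nothing further to compare.
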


Put
$$
\ce_p=\{x\in\bq_p: \ |x|_p=1, \ \ |x-1|_p<p^{-1/(p-1)}\}.
$$
As corollary of Lemma \ref{21} we have the following
\begin{lem}\label{epproperty}
The set $\ce_p$ has the following properties:\\
$(a)$ $\ce_p$ is a group under multiplication;\\
$(b)$ $|a-b|_p<1$ for all $a,b\in\ce_p$;\\
$(c)$ If $a,b\in\ce_p$ then it holds
\[
|a+b|_p=\left\{\begin{array}{ll}
\frac{1}{2}, & \mbox{if }\ p=2\\
1, & \mbox{if }\ p\neq2.
\end{array}\right.
\]\\
$(d)$ If $a\in\ce_p$, then
there is an element $h\in B(0,p^{-1/(p-1)})$ such that
$a=\exp_p(h)$.
\end{lem}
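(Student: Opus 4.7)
The plan is to verify the four items in turn, using the strong triangle inequality and Lemma \ref{21} throughout. All four parts are essentially formal manipulations, with no single obstacle; the main point is to be consistent about the bound $p^{-1/(p-1)}$, which is strictly less than $1$ for every prime $p$.

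For (a), closure is the key step. Given $a,b\in\ce_p$, clearly $|ab|_p=1$, and I would rewrite $ab-1=a(b-1)+(a-1)$ and apply the strong triangle inequality to obtain $|ab-1|_p\leq\max\{|b-1|_p,|a-1|_p\}<p^{-1/(p-1)}$, so $ab\in\ce_p$. The identity $1\in\ce_p$ is immediate. For inverses, since $|a|_p=1$ we have $a^{-1}\in\bq_p$ with $|a^{-1}|_p=1$, and $|a^{-1}-1|_p=|1-a|_p/|a|_p=|a-1|_p<p^{-1/(p-1)}$.

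For (b), writing $a-b=(a-1)-(b-1)$ and using the strong triangle inequality gives $|a-b|_p\leq\max\{|a-1|_p,|b-1|_p\}<p^{-1/(p-1)}\leq 1$. For (c), I would split into the two cases according to the value of $|2|_p$. Write
\[
a+b=2+(a-1)+(b-1).
\]
The second piece satisfies $|(a-1)+(b-1)|_p<p^{-1/(p-1)}$. If $p\neq 2$, then $|2|_p=1>p^{-1/(p-1)}$, so by the non-Archimedean rule (stated right after the definition of $|\cdot|_p$ in the excerpt) $|a+b|_p=|2|_p=1$. If $p=2$, then $|2|_2=\tfrac12$ and $p^{-1/(p-1)}=\tfrac12$, so $|(a-1)+(b-1)|_2<\tfrac12=|2|_2$, and the same rule yields $|a+b|_2=\tfrac12$.

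For (d), the natural candidate is $h:=\log_p(a)=\log_p(1+(a-1))$. Since $|a-1|_p<p^{-1/(p-1)}$, the series defining $\log_p$ converges and Lemma \ref{21} gives $|h|_p=|a-1|_p<p^{-1/(p-1)}$, i.e.\ $h\in B(0,p^{-1/(p-1)})$. Applying the identity $\exp_p(\log_p(1+x))=1+x$ from Lemma \ref{21} with $x=a-1$ then yields $\exp_p(h)=a$, as required.
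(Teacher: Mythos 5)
Your proof is correct, and it is exactly the argument the paper intends: the lemma is stated there without proof as an immediate corollary of Lemma \ref{21}, and your verification of (a)--(c) via the strong triangle inequality (including the careful case split at $|2|_2=2^{-1/(2-1)}$) and of (d) via $h=\log_p(a)$ together with $\exp_p(\log_p(1+x))=1+x$ supplies precisely the omitted details.
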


Note that the basics of $p$-adic analysis, $p$-adic mathematical
physics are explained in \cite{Ko,R,VVZ}.

\subsection{$p$-adic measure}

Let $(X,\cb)$ be a measurable space, where $\cb$ is an algebra of
subsets $X$. A function $\m:\cb\to \bq_p$ is said to be a {\it
$p$-adic measure} if for any $A_1,\dots,A_n\subset\cb$ such that
$A_i\cap A_j=\emptyset$ ($i\neq j$) the equality holds
$$
\mu\bigg(\bigcup_{j=1}^{n} A_j\bigg)=\sum_{j=1}^{n}\mu(A_j).
$$

A $p$-adic measure is called a {\it probability measure} if
$\mu(X)=1$.  One of the important condition (which was already
invented in the first Monna--Springer theory of non-Archimedean
integration \cite{Mona}) is boundedness, namely a $p$-adic
probability measure $\m$ is called {\it bounded} if
$\sup\{|\m(A)|_p : A\in \cb\}<\infty $. We pay attention to an
important special case in which boundedness condition by itself
provides a fruitful integration theory (see for example
\cite{Kh07}). Note that, in general, a $p$-adic probability
measure need not be bounded \cite{K3,KL,Ko}. For more detail
information about $p$-adic measures we refer to \cite{AKh,K3}.

\subsection{Cayley tree}

Let $\Gamma^k_+ = (V,L)$ be a semi-infinite Cayley tree of order
$k\geq 1$ with the root $x^0$ (whose each vertex has exactly $k+1$
edges, except for the root $x^0$, which has $k$ edges). Here $V$ is
the set of vertices and $L$ is the set of edges. The vertices $x$
and $y$ are called {\it nearest neighbors} and they are denoted by
$l=\langle{x,y}\rangle$ if there exists an edge connecting them. A collection of
the pairs $\langle{x,x_1}\rangle,\dots,\langle{x_{d-1},y}\rangle$ is called a {\it path} from
the point $x$ to the point $y$. The distance $d(x,y), x,y\in V$, on
the Cayley tree, is the length of the shortest path from $x$ to $y$.
$$
W_{n}=\left\{ x\in V\mid d(x,x^{0})=n\right\}, \ \
V_n=\overset{n}{\underset{m=1}{\bigcup}}W_{m}, \ \ L_{n}=\left\{
l=<x,y>\in L\mid x,y\in V_{n}\right\}.
$$
The set of direct successors of $x$ is defined by
$$
S(x)=\left\{ y\in W_{n+1}:d(x,y)=1\right\}, x\in W_{n}.
$$
Observe that any vertex $x\neq x^{0}$ has $k$ direct successors and
$x^{0}$ has $k+1$.

Two vertices $x,y\in V$ are called the \textit{next-nearest
neighbors} if $d(x,y)=2$. The next-nearest-neighbors vertices $x$
and $y$ are called the \textit{prolonged next-nearest neighbors}
if $x\in W_{n-2}$ and $y\in W_n$ for some $n\geq 1$, which are
denoted by $\rangle{x,y}\langle$.

Now we are going to introduce a coordinate structure in $\G_+^k$.
Every vertex $x$ (except for $x^{0}$) of $\G_+^k$ has coordinates
$(i_1,\dots,i_n)$, here $i_m\in\{1,\dots,k\},\ 1\leq m\leq n$ and
for the vertex $x^0$ we put $(0)$ (see Figure 1). Namely, the
symbol $(0)$ constitutes level $0$ and the sites $i_1,\dots,i_n$
form level $n$ of the lattice. In this notation for $x\in\G_+^k,\
x=\{i_1,\dots,i_n\}$ we have
$$
S(x)=\{(x,i): 1\leq i\leq k\},
$$
here $(x,i)$ means that $(i_1,\dots,i_n,i)$.

\begin{figure}
\begin{center}
\includegraphics[width=10.07cm]{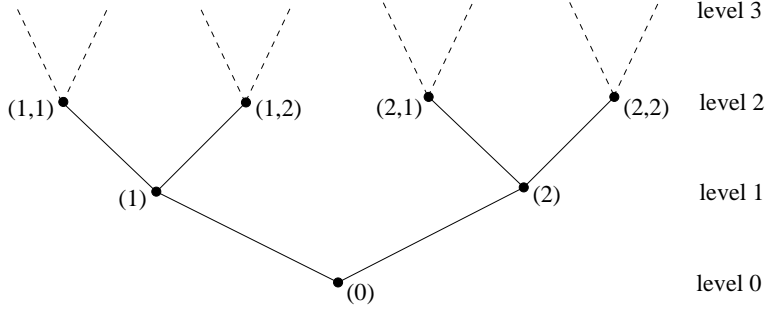}
\end{center}
\caption{The first levels of $\G_+^2$} \label{fig1}
\end{figure}

Let us define on $\G_+^k$ a binary operation $\circ:\G^k_+\times\G_+^k\to\G_+^k$
as follows, for any two elements $x=(i_1,\dots,i_n)$ and $y=(j_1,\dots,j_m)$ put
$$
x\circ y=(i_1,\dots,i_n)\circ(j_1,\dots,j_m)=(i_1,\dots,i_n,j_1,\dots,j_m)
$$
and
$$
y\circ x=(j_1,\dots,j_m)\circ(i_1,\dots,i_n)=(j_1,\dots,j_m,i_1,\dots,i_n).
$$
By means of the defined operation $\G_+^k$ becomes a
noncommutative semigroup with a unit. Using this semigroup
structure one defines translations $\tau_g:\G_+^k\to\G_+^k,\
g\in\G_k$ by
$$
\tau_g(x)=g\circ x.
$$
Similarly, by means of $\tau_g$ one can define translation $\tilde\tau_g:L\to L$ of $L$. Namely,
$$
\tilde\tau_g(\langle{x,y}\rangle)=\langle{\tau_g(x),\tau_g(y)}\rangle.
$$

Let $G\subset\G_+^k$ be a sub-semigroup of $\G_+^k$ and $h:L\to\bq_p$ be a function defined on $L$.
We say that $h$ is a $G$-{\it periodic} if $h(\tilde\tau_g(l))=h(l)$ for all $g\in G$ and $l\in L$.
Any $\G^k_+$-periodic function is called {\it translation-invariant}. Put
$$
G_m=\left\{x\in\G_+^k: d(x,x^0)\equiv0(\operatorname{mod }m)\right\},\ \ \ m\geq2.
$$

One can check that $G_m$ is a sub-semigroup with a unit.

\section{$p$-adic Ising-Vannimenus  model and its $p$-adic Gibbs measures}

In this section we consider the $p$-adic Ising-Vannimenus model
where spin takes values in the set $\Phi=\{-1,+1\}$, ($\Phi$ is
called a {\it state space}) and is assigned to the vertices of the
tree $\G^k_+=(V,\Lambda)$. A configuration $\s$ on $V$ is then
defined as a function $x\in V\to\s(x)\in\Phi$; in a similar manner
one defines configurations $\s$ and $\w$ on $V_n$ and $W_n$,
respectively. The set of all configurations on $V$ (resp. $V_n$,
$W_n$) coincides with $\Omega=\Phi^{V}$ (resp.
$\Omega_{V_n}=\Phi^{V_n},\ \ \Omega_{W_n}=\Phi^{W_n}$). One can see
that $\Om_{V_n}=\Om_{V_{n-1}}\times\Om_{W_n}$. Using this, for given
configurations $\s\in\Om_{V_{n-1}}$ and $\w\in\Om_{W_{n}}$ we
define their concatenations  by
$$
(\s\vee\w)(x)= \left\{
\begin{array}{ll}
\s(x), & \ \textrm{if} \ \  x\in V_{n-1},\\
\w(x), & \ \textrm{if} \ \ x\in W_n.\\
\end{array}
\right.
$$
It is clear that $\s\vee\w\in \Om_{V_n}$.

The Hamiltonian $H_n:\Om_{V_n}\to\bq_p$ of the $p$-adic
Ising-Vannimenus model has a form

\begin{equation}\label{ham1}
H_{n}(\sigma)=J\sum\limits_{\langle{x,y}\rangle\in
L_{n}}\sigma(x)\sigma(y)+J_{1}\sum\limits_{\rangle{x,y}\langle:
x,y\in V_{n}}\sigma(x)\sigma(y)
\end{equation}
where $J_{1},J\in B(0,p^{-1/(p-1)})$ are coupling constants.

Note that the last condition together with the strong triangle
inequality implies the existence of $\exp_p(H_n(\s))$ for all
$\s\in\Om_{V_n}$, $n\in\bn$. This is required to our construction.

Let
$\h:\langle{x,y}\rangle\to\h_{xy}=\left(h_{xy,++},h_{xy,+-},h_{xy,-+},h_{xy,--}\right)\in\bq_p^4$
be a vector valued function on $L$.

Given $n\in\bn$, let us consider a $p$-adic probability measure
$\m^{(n)}_\h$ on $\Om_{V_n}$ defined by

\begin{equation}\label{measure1}
\m_\h^{(n)}(\sigma)=\frac{1}{Z_{n}^{(\h)}}{\exp_{p}}{\left(H_{n}(\sigma)\right)}{\underset{x\in
W_{n-1},y\in
S(x)}{\prod}\left(h_{xy,\sigma(x)\sigma(y)}\right)^{\sigma(x)\sigma(y)}}
\end{equation}
Here, $\s\in\Om_{V_n}$, and $Z_n^{(\h)}$ is the corresponding
normalizing factor called a {\it partition function} given by

\begin{equation}\label{partition}
Z_{n}^{(\h)}=\sum_{\s\in\Omega_{V_n}}{\exp_{p}}{\left(H_{n}(\sigma)\right)}{\underset{x\in
W_{n-1},y\in
S(x)}{\prod}\left(h_{xy,\sigma(x)\sigma(y)}\right)^{\sigma(x)\sigma(y)}}.
\end{equation}

\begin{rem} We point out that, in general, in the definition of the
measure \eqref{measure1}, one can replace $\exp_p$ by any $p$-adic
number $\rho$. Such a kind of approach has been developed in
\cite{M12,M13}. For the sake of simplicity, we restrict ourselves
to the considered case. A more general case will be investigated
elsewhere.
\end{rem}

We recall \cite{M13,Roz} that one of the central results of the
theory of probability concerns a construction of an infinite
volume distribution with given finite-dimensional distributions,
which is a well-known {\it Kolmogorov's extension Theorem}
\cite{Sh}. Therefore, in this paper we are interested in the same
question but in a $p$-adic context. More exactly, we want to
define a $p$-adic probability measure $\m$ on $\Om$ which is
compatible with defined ones $\m_\h^{(n)}$, i.e.
\begin{equation}\label{CM}
\m(\s\in\Om: \s|_{V_n}=\s_n)=\m^{(n)}_\h(\s_n), \ \ \ \textrm{for
all} \ \ \s_n\in\Om_{V_n}, \ n\in\bn.
\end{equation}

In general, \`{a} priori the existence such a kind of measure $\m$
is not known, since there is not much information on topological
properties, such as compactness, of the set of all $p$-adic
measures defined even on compact spaces \footnote{ In the real
case, when the state space is compact, then the existence follows
from the compactness of the set of all probability measures (i.e.
Prohorov's Theorem). When the state space is non-compact, then
there is a Dobrushin's Theorem \cite{Dob1,Dob2} which gives a
sufficient condition for the existence of the Gibbs measure for a
large class of Hamiltonians.}. Note that certain properties of the
set of $p$-adic measures has been studied in \cite{kas2,kas3}, but
those properties are not enough to prove the existence of the
limiting measure. Therefore, at a moment, we can only use the
$p$-adic Kolmogorov extension Theorem (see \cite{GMR},\cite{KL})
which based on so called {\it compatibility condition} for the
measures $\m_\h^{(n)}$, $n\geq 1$, i.e.
\begin{equation}\label{comp}
\sum_{\w\in\Om_{W_n}}\m^{(n)}_\h(\s_{n-1}\vee\w)=\m^{(n-1)}_\h(\s_{n-1}),
\end{equation}
for any $\s_{n-1}\in\Om_{V_{n-1}}$. This condition according to
the theorem implies the existence of a unique $p$-adic measure
$\m$ defined on $\Om$ with a required condition \eqref{CM}. We
should stress that using the compatibility condition for the Ising
model on the Bethe lattice, in the real case, was started in
\cite{BRZ1} (see \cite{Roz} for review). Note that more general
theory of $p$-adic measures has been developed in \cite{kas1}.

Following \cite{M12,M13} if for some function $\h$ the measures
$\m_\h^{(n)}$ satisfy the compatibility condition, then there is a
unique $p$-adic probability measure, which we denote by $\m_\h$.
Such a measure $\m_\h$ is said to be {\it a $p$-adic quasi Gibbs
measure} corresponding to the $p$-adic Ising-Vannimenus model. By
$Q\cg(H)$ we denote the set of all $p$-adic quasi Gibbs measures
associated with functions $\h=\{\h_{xy},\ \langle{x,y}\rangle\in
L\}$. A $p$-adic quasi Gibbs measure defined by \eqref{measure1}
is called \textit{$p$-adic Gibbs measure} if $\h_{xy}\in\ce_{p}^4$
for all $\langle{x,y}\rangle\in L$. The set of $p$-adic Gibbs
measures we denote by $\cg(H)$. If there are at least two distinct
$p$-adic quasi Gibbs measures $\m,\n\in Q\cg(H)$ such that $\m$ is
bounded and $\n$ is unbounded, then we say that {\it a phase
transition} occurs. By another words, one can find two different
functions $\sb$ and $\h$ defined on $\bn$ such that there exist
the corresponding measures $\m_\sb$ and $\m_\h$, for which one is
bounded, another one is unbounded. Moreover, if there is a
sequence of sets $\{A_n\}$ such that $A_n\in\Om_{V_n}$ with
$|\m(A_n)|_p\to 0$ and $|\n(A_n)|_p\to\infty$ as $n\to\infty$,
then we say that there occurs a {\it strong phase transition}.

\begin{rem} We would like to point out that if one considers the usual
Ising model (i.e. in the real setting) on a multidimensional
lattice, then at low temperature there occurs a phase transition,
i.e. there exist $q$-different Gibbs measures $\m_i$,
($i=1,\dots,q$), i.e.
$$
\m_+(\s(0)=1)>1/2, \ \ \m_-(\s(0)=-1)<1/2 \ \ j\neq i.
$$
This implies that the measures $\m_\pm$ are mutually singular to
each other. The strong phase transition (see definition above), in
the $p$-adic setting, has the similar meaning as singularity, i.e.
the $p$-adic measures $\m$ and $\n$ are "singular" (in the above
given sense). Here we have to stress that absolutely continuity
and singularity of $p$-adic measures cannot be directly defined in
a similar manner with real case. Absolutely continuity of $p$-adic
measures have been studied in \cite{kas1}. The singularity what we
are proposing is consistent with that absolutely continuity
introduced in \cite{kas1}.
\end{rem}

\begin{thm}(\cite{MFDMAH})\label{compatibility}
The measures $\m^{(n)}_\h$, $ n=1,2,\dots$ (see \eqref{measure1})
satisfy the compatibility condition \eqref{comp} if and only if
for any $n\in \bn$ the following equation holds:
\begin{equation}\label{canonic3}
\begin{cases}
h_{xy,++}\cdot h_{xy,-+}=\prod\limits_{z\in S(y)}
\frac{(ab)^2h_{yz,++}h_{yz,+-}+1}{a^2h_{yz,++}h_{yz,+-}+b^2}\\
h_{xy,--}\cdot h_{xy,+-}=\prod\limits_{z\in S(y)}
\frac{(ab)^2h_{yz,--}h_{yz,-+}+1}{a^2h_{yz,--}h_{yz,-+}+b^2}\\
 h_{xy,++}\cdot h_{xy,+-}=\prod\limits_{z\in S(y)}
\frac{\big((ab)^2h_{yz,++}h_{yz,+-}+1\big)h_{yz,-+}}{\big(a^2h_{yz,--}h_{yz,-+}+b^2\big)h_{yz,+-}}\\
\end{cases}
\end{equation}
where $a=\exp_p(J)$, $b=\exp_p(J_1)$.
\end{thm}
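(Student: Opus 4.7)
The plan is to unfold both sides of the compatibility condition \eqref{comp} using the definition \eqref{measure1}, exploit the tree structure of $\G_+^k$ to decouple the summation over $\Omega_{W_n}$, and then match coefficients to derive the system \eqref{canonic3}.

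First, I split $H_n(\sigma_{n-1}\vee\w)=H_{n-1}(\sigma_{n-1})+\Delta_n$, where $\Delta_n$ collects only the bonds introduced when level $W_n$ is adjoined: the nearest-neighbour bonds $\langle y,z\rangle$ with $y\in W_{n-1},\,z\in S(y)$, contributing $J\sigma_{n-1}(y)\w(z)$, and the prolonged next-nearest-neighbour bonds with $x\in W_{n-2},\,y\in S(x),\,z\in S(y)$, contributing $J_1\sigma_{n-1}(x)\w(z)$. Since $J,J_1\in B(0,p^{-1/(p-1)})$, Lemma \ref{21} makes $\exp_p$ multiplicative on these increments, so with $a=\exp_p(J)$ and $b=\exp_p(J_1)$ we obtain $\exp_p(H_n(\sigma_{n-1}\vee\w))=\exp_p(H_{n-1}(\sigma_{n-1}))\prod_{y,z}a^{\sigma_{n-1}(y)\w(z)}\prod_{x,y,z}b^{\sigma_{n-1}(x)\w(z)}$. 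Substituting into \eqref{comp}, cancelling $\exp_p(H_{n-1}(\sigma_{n-1}))$, and noting that each $\w(z)$ appears only in the three factors attached to the single vertex $z\in W_n$, the sum $\sum_{\w\in\Omega_{W_n}}$ factorises as $\prod_{z\in W_n}$. Regrouping this product by parents $y\in W_{n-1}$ (writing $x_y\in W_{n-2}$ for the parent of $y$) reduces the compatibility condition to
\begin{equation*}
\prod_{x\in W_{n-2},\,y\in S(x)}F_y(\sigma_{n-1}(x),\sigma_{n-1}(y))=\frac{Z_n^{(\h)}}{Z_{n-1}^{(\h)}}\prod_{x\in W_{n-2},\,y\in S(x)}(h_{xy,\sigma_{n-1}(x)\sigma_{n-1}(y)})^{\sigma_{n-1}(x)\sigma_{n-1}(y)},
\end{equation*}
required for every $\sigma_{n-1}\in\Omega_{V_{n-1}}$, where $F_y(u,v):=\prod_{z\in S(y)}\sum_{s=\pm1}a^{vs}b^{us}(h_{yz,vs})^{vs}$.

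Since both sides factorise over edges and the spins at different vertices can be varied independently, the natural way to enforce this identity for all $\sigma_{n-1}$ is edgewise: for each edge $\langle x,y\rangle$ there must exist a $C_y\in\bq_p$ (independent of $\sigma_{n-1}$) with $F_y(u,v)=C_y(h_{xy,uv})^{uv}$ for all $(u,v)\in\{+,-\}^2$, and $Z_n^{(\h)}/Z_{n-1}^{(\h)}=\prod_y C_y$. A direct expansion of the inner sum in each sign pair gives
\begin{align*}
F_y(+,+)&=\prod_{z\in S(y)}\frac{(ab)^2 h_{yz,++}h_{yz,+-}+1}{ab\,h_{yz,+-}},\\
F_y(-,+)&=\prod_{z\in S(y)}\frac{a^2 h_{yz,++}h_{yz,+-}+b^2}{ab\,h_{yz,+-}},\\
F_y(+,-)&=\prod_{z\in S(y)}\frac{a^2 h_{yz,--}h_{yz,-+}+b^2}{ab\,h_{yz,-+}},\\
F_y(-,-)&=\prod_{z\in S(y)}\frac{(ab)^2 h_{yz,--}h_{yz,-+}+1}{ab\,h_{yz,-+}}.
\end{align*}
Eliminating $C_y$ by taking the three ratios $F_y(+,+)/F_y(-,+)$, $F_y(-,-)/F_y(+,-)$, and $F_y(+,+)/F_y(+,-)$ and simplifying produces exactly the three equalities of \eqref{canonic3}. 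For the converse, if \eqref{canonic3} holds, one sets $C_y:=F_y(+,+)/h_{xy,++}$ and checks algebraically that the three given relations force the remaining identity $F_y(-,-)=C_y\,h_{xy,--}$; the edgewise equality, and hence \eqref{comp}, then follows.

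The main technical obstacle is the case-by-case expansion of $F_y(u,v)$: for each of the four sign choices one must carefully track whether the factor $(h_{yz,vs})^{vs}$ is $h_{yz,vs}$ or its reciprocal (depending on the sign of the product $vs$), and then combine the two summands over a common denominator $ab\,h_{yz,v\pm}$. The bookkeeping is delicate because the subscript of $h$ (a sign pair) and the exponent (the product of those signs) use identical notation; once the four $F_y$'s are in hand, however, the three recurrences in \eqref{canonic3} drop out immediately by pairwise division.
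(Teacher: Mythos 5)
You should first note that this paper does not actually prove Theorem \ref{compatibility}: it is imported from \cite{MFDMAH}, so your reconstruction can only be compared with the standard derivation used there, and in its computational core it matches it. The splitting $H_n(\s_{n-1}\vee\w)=H_{n-1}(\s_{n-1})+\Delta_n$, the factorisation of the sum over $\Om_{W_n}$ vertex by vertex, and your four expansions of $F_y(u,v)$ are all correct, and the three ratios $F_y(+,+)/F_y(-,+)$, $F_y(-,-)/F_y(+,-)$, $F_y(+,+)/F_y(+,-)$ do reproduce \eqref{canonic3} exactly. The sufficiency direction (from \eqref{canonic3} to \eqref{comp}), including the identification $Z_n^{(\h)}/Z_{n-1}^{(\h)}=\prod_y C_y$ obtained by summing the resulting identity over $\s_{n-1}$, is complete.

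The gap is in the necessity direction, at the sentence ``the natural way to enforce this identity for all $\s_{n-1}$ is edgewise: for each edge $\langle x,y\rangle$ there must exist a $C_y$''. This is asserted, not proved, and it is precisely the delicate point. After cancelling $\exp_p(H_{n-1}(\s_{n-1}))$, the identity reads $\prod_{y\in W_{n-1}}R_y(\s(x_y),\s(y))=Z_n^{(\h)}/Z_{n-1}^{(\h)}$ with $R_y(u,v)=F_y(u,v)\,(h_{x_yy,uv})^{-uv}$. Flipping the spin at a leaf $y\in W_{n-1}$ does isolate the single edge $\langle x_y,y\rangle$ and yields $R_y(u,+)=R_y(u,-)$ for $u=\pm$; this gives the third equation of \eqref{canonic3} together with the combination (first)$\cdot$(second)$/$(third), i.e. $h_{xy,-+}h_{xy,--}=F_y(-,-)/F_y(-,+)$. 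But flipping the spin at $x\in W_{n-2}$ changes the factors of all $k$ edges $\langle x,y\rangle$, $y\in S(x)$, simultaneously, so this comparison only yields the product condition $\prod_{y\in S(x)}\rho_y(+)=\prod_{y\in S(x)}\rho_y(-)$, where $\rho_y(u):=R_y(u,+)$, and not the per-edge equality $\rho_y(+)=\rho_y(-)$ which is exactly the content of the first (equivalently second) equation of \eqref{canonic3}. For $k\geq2$ the per-edge statement does not follow from the sibling-product statement by varying spins alone, since the remaining free parameters $h_{xy,++}h_{xy,-+}$ on the different sibling edges are independent. So your ``only if'' argument is incomplete as written: you need either an additional argument showing why the constant can be taken separately on each edge, or an explicit acknowledgement that this step is being taken over from \cite{MFDMAH} rather than derived.
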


\begin{rem} If we take $J_1=0$, i.e. $b=1$, then \eqref{canonic3}
reduces to the well-known equation for the Ising model (see
\cite{KM,MFDM} for details). For this model the existence of two
different $p$-adic quasi Gibbs measures has been considered in
\cite{Khak1}.
\end{rem}

For convenience, let us denote
\begin{equation}\label{denuh}
\begin{array}{cc}
u_{xy,1}=a^2h_{xy,++}h_{xy,-+}\\
u_{xy,2}=a^2h_{xy,--}h_{xy,-+}\\
u_{xy,3}=a^2h_{xy,++}h_{xy,+-}
\end{array}
\end{equation}
and rewrite (\ref{canonic3}) by

\begin{equation}\label{canonic_u}
\left\{\begin{array}{ll}
u_{xy,1}=a^2\prod\limits_{z\in S(y)}\frac{b^2u_{yz,3}+1}{u_{yz,3}+b^2}\\[4mm]
u_{xy,2}=a^2\prod\limits_{z\in S(y)}\frac{(b^2u_{yz,2}+1)u_{yz,3}}{(u_{yz,3}+b^2)u_{yz,1}}\\[4mm]
u_{xy,3}=a^2\prod\limits_{z\in S(y)}\frac{(b^2u_{yz,3}+1)u_{yz,1}}{(u_{yz,2}+b^2)u_{yz,3}}
 \end{array}\right.
\end{equation}

Hence, due to Theorem \ref{compatibility} the problem of
describing the $p$-adic quasi Gibbs measures is reduced to the
description of solutions of the functional equations
\eqref{canonic_u}. It is worth mentioning that there are
infinitely many solutions of the system of equations
\eqref{canonic3} corresponding to each solution of the system of
equations \eqref{canonic_u}. However, each solution of the system
of equations \eqref{canonic_u} uniquely determines a $p$-adic
quasi Gibbs measure.

\begin{thm}\label{pro1}
There exists a unique $p$-adic  quasi Gibbs measure $\mu_\mathbf{u}$ associated
with the function $\mathbf{u}=\{\mathbf{u}_{xy}, \
\langle{x,y}\rangle\in L \}$  where $\mathbf{u}_{xy}=(u_{xy,1},u_{xy,2},u_{xy,3})$
is a solution of the system of equations \eqref{canonic_u}. Moreover, the measure
$\mu_\mathbf{u}$ is the $p$-adic Gibbs measure if and only if
$\mathbf{u}_{xy}\in\ce^3_p$ for all $\langle x,y\rangle\in L$.
\end{thm}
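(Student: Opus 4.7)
The plan is to pass between the systems \eqref{canonic3} and \eqref{canonic_u} via the substitution \eqref{denuh}, exploiting that this substitution quotients out a one-parameter gauge freedom in $\mathbf{h}$ that does not affect the measure $\mu^{(n)}_{\mathbf{h}}$ defined in \eqref{measure1}.

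First I would establish existence. Given a solution $\mathbf{u}$ of \eqref{canonic_u}, I fix a gauge and, for each edge $\langle x,y\rangle\in L$, set
\[
h_{xy,++}=1,\qquad h_{xy,-+}=\frac{u_{xy,1}}{a^{2}},\qquad h_{xy,+-}=\frac{u_{xy,3}}{a^{2}},\qquad h_{xy,--}=\frac{u_{xy,2}}{u_{xy,1}}.
\]
A direct check shows that this recovers $\mathbf{u}_{xy}$ through \eqref{denuh}. Substituting $a^{2}h_{yz,++}h_{yz,+-}=u_{yz,3}$ and $a^{2}h_{yz,--}h_{yz,-+}=u_{yz,2}$ into the right-hand sides of \eqref{canonic3}, the first and third equations of \eqref{canonic3} reduce respectively to the first and third equations of \eqref{canonic_u}. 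For the second equation of \eqref{canonic3}, the gauge above gives $h_{xy,--}h_{xy,+-}=u_{xy,2}u_{xy,3}/(a^{2}u_{xy,1})$, and one verifies the identity by multiplying the second equation of \eqref{canonic_u} by the third and dividing by the first: the factors $(u_{yz,3}+b^{2})$ and $(b^{2}u_{yz,3}+1)$ inside the product over $S(y)$ cancel, leaving the required $\prod_{z\in S(y)}(b^{2}u_{yz,2}+1)/(u_{yz,2}+b^{2})$. Thus \eqref{canonic3} holds; Theorem \ref{compatibility} then gives the compatibility condition \eqref{comp}, and the $p$-adic Kolmogorov extension theorem delivers a quasi Gibbs measure, which we label $\mu_{\mathbf{u}}$.

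Next I would show that $\mu_{\mathbf{u}}$ depends on $\mathbf{u}$ alone, not on the chosen representative $\mathbf{h}$. Any two $\mathbf{h},\mathbf{h}'$ yielding the same $\mathbf{u}$ through \eqref{denuh} differ by a gauge parameter $\lambda_{xy}\in\bq_{p}^{\times}$ on each edge, namely
\[
h'_{xy,++}=\lambda_{xy}h_{xy,++},\quad h'_{xy,--}=\lambda_{xy}h_{xy,--},\quad h'_{xy,+-}=\lambda_{xy}^{-1}h_{xy,+-},\quad h'_{xy,-+}=\lambda_{xy}^{-1}h_{xy,-+}.
\]
Checking the four cases $\sigma(x)\sigma(y)=\pm 1$ shows $(h'_{xy,\sigma(x)\sigma(y)})^{\sigma(x)\sigma(y)}=\lambda_{xy}\cdot(h_{xy,\sigma(x)\sigma(y)})^{\sigma(x)\sigma(y)}$ regardless of the spins. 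Hence the unnormalized weight in \eqref{measure1} picks up the configuration-independent factor $\prod_{x\in W_{n-1},\,y\in S(x)}\lambda_{xy}$, which cancels against the same factor in $Z^{(\mathbf{h}')}_{n}$. Therefore $\mu^{(n)}_{\mathbf{h}'}=\mu^{(n)}_{\mathbf{h}}$ for every $n$, so $\mu_{\mathbf{u}}$ is well-defined and unique.

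Finally, for the Gibbs dichotomy I use Lemma \ref{21}: from $J,J_{1}\in B(0,p^{-1/(p-1)})$ it follows that $a,b\in\ce_{p}$, and hence $a^{2},b^{2}\in\ce_{p}$. Since $\ce_{p}$ is a multiplicative group by Lemma \ref{epproperty}, the relations \eqref{denuh} give immediately that $\mathbf{h}_{xy}\in\ce_{p}^{4}$ implies $\mathbf{u}_{xy}\in\ce_{p}^{3}$; conversely, if $\mathbf{u}_{xy}\in\ce_{p}^{3}$ then the explicit gauge chosen above produces a representative $\mathbf{h}\in\ce_{p}^{4}$. The main technical obstacle I anticipate is verifying the second equation of \eqref{canonic3} in the existence step: unlike the other two it does not correspond to a single equation of \eqref{canonic_u}, but arises only after combining all three and keeping careful track of the cancellations in the product over $S(y)$.
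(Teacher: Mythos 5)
Your proposal is correct and takes essentially the same route as the paper: you construct $\mathbf{h}$ from $\mathbf{u}$ by the same formulas (the paper keeps $h_{xy,++}$ arbitrary and checks that its explicit expression \eqref{mu_u} does not depend on it, while you fix the gauge $h_{xy,++}=1$ and prove invariance of \eqref{measure1} under the edge-wise rescaling $\lambda_{xy}$, which is the same cancellation). The only differences are that you verify explicitly that the constructed $\mathbf{h}$ solves \eqref{canonic3} (including the combination $\mathrm{U}_2\cdot\mathrm{U}_3/\mathrm{U}_1$ for the second equation) and that you supply the short argument for the ``Moreover'' equivalence with $\ce_p^3$, both of which the paper's proof leaves implicit.
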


\begin{proof}
Let $\mathbf{u}=\{\mathbf{u}_{xy}, \ \langle{x,y}\rangle\in L \}$
be a  function, where
$\mathbf{u}_{xy}=(u_{xy,1},u_{xy,2},u_{xy,3})$ is a solution of
the system of equations \eqref{canonic_u}. Then, for any $p$-adic
number $h_{xy,++}\in\bq_p\setminus\{0\}$, a function
$\mathbf{h}=\{\mathbf{h}_{xy},\ \langle{x,y}\rangle\in L\}$
defined by
$$
\mathbf{h}_{xy}=\left(h_{xy,++},\frac{u_{xy,3}}{a^2h_{xy,++}},
\frac{u_{xy,1}}{a^2h_{xy,++}},\frac{u_{xy,2}h_{xy,++}}{u_{xy,1}}\right)
$$
is a solution of \eqref{canonic3}.

Now fix $n\geq1$. Since $|W_{n-1}|=k^{n-1}$ and $|S(x)|=k$ we get
$|L_{n}\setminus L_{n-1}|=k^n$. Let $\s$ be any configuration in
$\Om_{V_n}$. Denote
\[
\begin{array}{ll}
\cn_{1,n}(\s)=\{\langle{x,y}\rangle\in{L_n\setminus{L_{n-1}}}:\ \s(x)=1,\ \s(y)=1,\ x\in W_{n-1},\ y\in S(x)\}\\
\cn_{2,n}(\s)=\{\langle{x,y}\rangle\in{L_n\setminus{L_{n-1}}}:\ \s(x)=1,\ \s(y)=-1,\ x\in W_{n-1},\ y\in S(x)\}\\
\cn_{3,n}(\s)=\{\langle{x,y}\rangle\in{L_n\setminus{L_{n-1}}}:\ \s(x)=-1,\ \s(y)=1,\ x\in W_{n-1},\ y\in S(x)\}\\
\cn_{4,n}(\s)=\{\langle{x,y}\rangle\in{L_n\setminus{L_{n-1}}}:\ \s(x)=-1,\ \s(y)=-1,\ x\in W_{n-1},\ y\in S(x)\}
\end{array}
\]

We have
\begin{eqnarray*}
\prod_{x\in W_{n-1}\atop{y\in
S(x)}}(h_{xy,\s(x)\s(y)})^{\s(x)\s(y)}&=&\prod\limits_{\langle
x,y\rangle\in\cn_{1,n}(\s)}h_{xy,++} \prod\limits_{\langle
x,y\rangle\in\cn_{2,n}(\s)}\frac{a^2h_{xy,++}}{u_{xy,3}}\prod\limits_{\langle
x,y\rangle\in\cn_{3,n}(\s)}\frac{a^2h_{xy,++}}{u_{xy,1}}\\[2mm]
&&\times \prod\limits_{\langle
x,y\rangle\in\cn_{4,n}(\s)}\frac{u_{xy,2}h_{xy,++}}{u_{xy,1}}\\[2mm]
& =&\prod\limits_{\langle x,y\rangle\in L_n\setminus
L_{n-1}}h_{xy,++}\prod\limits_{\langle
x,y\rangle\in\cn_{2,n}(\s)}\frac{a^2}{u_{xy,3}}
\prod\limits_{\langle
x,y\rangle\in\cn_{3,n}(\s)}\frac{a^2}{u_{xy,1}}\\[2mm]
&&\times \prod\limits_{\langle
x,y\rangle\in\cn_{4,n}(\s)}\frac{u_{xy,2}}{u_{xy,1}}
\end{eqnarray*}
By means of the last equalities, one can get from (\ref{measure1})
and (\ref{partition}) that
\begin{eqnarray}\label{mu_u}
\mu_{h}^{(n)}(\s)&=&\frac{\exp_p(H_n(\s))\prod\limits_{x\in
W_{n-1}\atop{y\in S(x)}}(h_{xy,\s(x)\s(y)})^{\s(x)\s(y)}}
{\sum\limits_{\w\in\Om_{V_n}}\exp_p(H_n(\w))\prod\limits_{x\in
W_{n-1}\atop{y\in S(x)}}(h_{xy,\w(x)\w(y)})^{\w(x)\w(y)}}
\nonumber\\[2mm]
& =&\frac{\exp_p(H_n(\s))\prod\limits_{\langle
x,y\rangle\in\cn_{2,n}(\s)}\frac{a^2}{u_{xy,3}}
\prod\limits_{\langle
x,y\rangle\in\cn_{3,n}(\s)}\frac{a^2}{u_{xy,1}}
\prod\limits_{\langle
x,y\rangle\in\cn_{4,n}(\s)}\frac{u_{xy,2}}{u_{xy,1}}}
{\sum\limits_{\w\in\Om_{V_n}}\exp_p(H_n(\w))\prod\limits_{\langle
x,y\rangle\in\cn_{2,n}(\w)}\frac{a^2}{u_{xy,3}}
\prod\limits_{\langle
x,y\rangle\in\cn_{3,n}(\w)}\frac{a^2}{u_{xy,1}}
\prod\limits_{\langle
x,y\rangle\in\cn_{4,n}(\w)}\frac{u_{xy,2}}{u_{xy,1}}}
\end{eqnarray}

One can see the right hand side of \eqref{mu_u} does not depend on
$h_{xy,++}$. So, we can see that each solution $\mathbf{u}$ of the
system of equations \eqref{canonic_u} uniquely determines only one
$p$-adic quasi Gibbs measure $\mu_\mathbf{u}$.
\end{proof}

\section{The existence of $p$-adic quasi Gibbs Measures}

In this section we are going to establish the existence of
$p$-adic quasi Gibbs measures by analyzing the equation
\eqref{canonic_u}.

Recall that a $\G_+^k$-periodic function is called
\textit{translation-invariant}. Let ${\bf u}=\{{\bf
u}_{xy}\}_{\langle{x,y}\rangle\in L}$ be a translation-invariant
function i.e. $\mathbf{u}_{xy}=\mathbf{u}_{zw}$ for all
$\langle{x,y}\rangle,\langle{z,w}\rangle\in L$. A $p$-adic quasi
Gibbs measure $\m_{\bf u}$, corresponding to a
translation-invariant function ${\bf u}$, is called a {\it
translation-invariant $p$-adic quasi Gibbs measure}. The set of
translation-invariant $p$-adic quasi Gibbs (resp. $p$-adic Gibbs)
measures is denoted by $Q\cg(H)_{TI}$ (resp. $\cg(H)_{TI}$).

To solve the equation \eqref{canonic_u}, in general, is very
complicated. Therefore,
 let us first restrict ourselves to
the description of translation-invariant solutions of
\eqref{canonic_u}. Therefore, \eqref{canonic_u} reduces to
\begin{equation}\label{tru}
\left\{\begin{array}{ll}
u_1=a^2\left(\frac{b^2u_3+1}{u_3+b^2}\right)^k\\[3mm]
u_2=a^2\left(\frac{(b^2u_2+1)u_3}{(u_3+b^2)u_1}\right)^k\\[3mm]
u_3=a^2\left(\frac{(b^2u_3+1)u_1}{(u_2+b^2)u_3}\right)^k
\end{array}\right.
\end{equation}
In what follows, we will always assume that $p\geq 3$ and $b\neq1$.

\subsection{The study of the system \eqref{tru}}

In this subsection we are aiming to study the set of all solutions
of the system \eqref{tru}.

Let $\mathbf{u}=(u_1,u_2,u_3)$ and
$F(\mathbf{u})=(F_1(\mathbf{u}),F_2(\mathbf{u}),F_3(\mathbf{u}))$
be a mapping, where
$$F_1(\mathbf{u})=a^2\left(\frac{b^2u_3+1}{u_3+b^2}\right)^k, \quad F_2(\mathbf{u})=
a^2\left(\frac{(b^2u_2+1)u_3}{(u_3+b^2)u_1}\right)^k, \quad F_3(\mathbf{u})=
a^2\left(\frac{(b^2u_3+1)u_1}{(u_2+b^2)u_3}\right)^k.$$

Clearly that the solutions of \eqref{tru} are fixed points of the
mapping $F$. Therefore, the  set $\textbf{Fix}(F)=\{\mathbf{u}:
F(\mathbf{u})=\mathbf{u}\}$ describes all solutions of the system
\eqref{tru}.

\begin{prop}\label{solutionnorm}
If $\mathbf{u}=(u_1,u_2,u_3)\in\textbf{Fix}(F)$ then one has that $|u_2|_p=|u_3|_p=1$.
\end{prop}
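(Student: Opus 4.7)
The plan is to pass to $p$-adic absolute values in each equation of the system \eqref{tru} and rule out any deviation of $|u_2|_p$ or $|u_3|_p$ from $1$ by case analysis. The elementary facts that feed the argument are that, since $a=\exp_p(J)$ and $b=\exp_p(J_1)$ lie in $\ce_p$, one has $|a|_p=|b|_p=|b^2|_p=1$ together with $|b^2-1|_p<1$, hence $b^2\equiv 1\pmod{p}$. Combined with the strong triangle inequality this yields clean formulas for $|b^2u+1|_p$ and $|u+b^2|_p$ in terms of $|u|_p$: both norms equal $|u|_p$ when $|u|_p>1$, both equal $1$ when $|u|_p<1$, and both are $\le 1$ when $|u|_p=1$, with strict inequality in both if and only if $u\equiv -1\pmod{p}$ (since $b^2u+1\equiv u+1\equiv u+b^2\pmod{p}$). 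Denote $\a=|u_1|_p$, $\b=|u_2|_p$, $\g=|u_3|_p$.

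First I would establish $\g=1$, treating $\g>1$ and $\g<1$ separately. If $\g>1$, the first equation of \eqref{tru} immediately gives $\a=1$, and the third equation forces $|u_2+b^2|_p^k=\g^{-1}<1$; in particular $|u_2|_p=1$ with $u_2\equiv -1\pmod{p}$, so $|b^2u_2+1|_p<1$, and substituting back into the second equation yields $\b=|b^2u_2+1|_p^k<1$, contradicting $|u_2|_p=1$. If $\g<1$, then again $\a=1$, and the second and third equations of \eqref{tru} produce, after taking norms, the multiplicative system $\b^{k-1}\g^k=1$ and $\g^{k+1}\b^k=1$; eliminating one exponent forces $\b=1$ and hence $\g=1$, contradicting $\g<1$.

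Having established $\g=1$, I would introduce the auxiliary norms $P=|b^2u_3+1|_p\le 1$ and $Q=|u_3+b^2|_p\le 1$, which by the above satisfy $P<1\iff Q<1$. The first equation of \eqref{tru} gives $\a=(P/Q)^k$, while the third (with $\g=1$) gives $|u_2+b^2|_p=P\a$. Supposing $\b>1$, one has $|u_2+b^2|_p=\b$, hence $\b=P^{k+1}/Q^k$, and the second equation yields $\b^{k-1}=P^{k^2}/Q^{k(k-1)}$; comparing these two expressions cancels the $Q$-factors and leaves $P^{1/(k-1)}=1$, forcing $P=Q=1$ and hence $\b=1$, a contradiction. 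The case $\b<1$ is analogous: now $|b^2u_2+1|_p=1$ (because $|u_2|_p<1$), and combining $P\a=1$ from the third equation with $P^{k+1}=Q^k$ from the first eventually forces $\b=1/P\ge 1$, again a contradiction.

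The main obstacle is the borderline regime where $|u_i|_p=1$ for the $u_i$ under consideration: there the non-Archimedean absolute value alone cannot resolve $|b^2u_i+1|_p$ and $|u_i+b^2|_p$, so no single equation of \eqref{tru} suffices. The remedy is to carry these borderline norms $P$ and $Q$ as coupled auxiliary unknowns and to exploit the decisive linkage $P<1\iff Q<1$ coming from $b^2\equiv 1\pmod{p}$; the three equations of \eqref{tru} then give an overdetermined multiplicative system in $\a,\b,P,Q$ whose only consistent solution forces $P=Q=1$, and the argument collapses to $\b=1$.
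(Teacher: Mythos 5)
Your proposal is correct and proceeds by essentially the same method as the paper: take $p$-adic norms in \eqref{tru}, exploit $a,b\in\ce_p$ and the strong triangle inequality, first rule out $|u_3|_p>1$ and $|u_3|_p<1$, then rule out $|u_2|_p\neq 1$ (the paper organizes this last step by multiplying the second and third equations and invoking the first, rather than your $P,Q$ bookkeeping, but the substance is the same). The only point to tighten is the case $|u_3|_p<1$: your system $\beta^{k-1}\gamma^{k}=1$, $\gamma^{k+1}\beta^{k}=1$ presupposes $\beta=|u_2|_p>1$, which does hold because the third equation forces $|u_2+b^2|_p^{k}=\gamma^{-(k+1)}>1$, but this (and the immediate subcases $\beta\le 1$) should be stated explicitly.
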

\begin{proof} Let $\mathbf{u}=(u_1,u_2,u_3)\in\textbf{Fix}(F)$ be a solution of \eqref{tru}.

Let us first prove that $|u_3|_p=1$.  Assume that $|u_3|_p\neq 1$.
In this case, due to $|b^2u_3+1|_p=|u_3+b^2|,$ from the first
equation of \eqref{tru} we obtain that $|u_1|_p=1$. By
substituting it, into the second and the third equations of
\eqref{tru}, one finds
\begin{equation}\label{1norm}
\left\{\begin{array}{ll}
|u_2|_p=|b^2u_2+1|_p^k\cdot|u_3|_p^k\\[3mm]
|u_3|_p=|u_2+b^2|_p^{-k}\cdot|u_3|_p^{-k}
\end{array}\right.\ \ \ \ \mbox{whenever}\ \ |u_3|_p<1
\end{equation}
and
\begin{equation}\label{2norm}
\left\{\begin{array}{ll}
|u_2|_p=|b^2u_2+1|_p^k\\[3mm]
|u_3|_p=|u_2+b^2|_p^{-k}
\end{array}\right.\ \ \ \ \mbox{whenever}\ \ |u_3|_p>1
\end{equation}

It follows from the first equality of \eqref{1norm} that
$|u_2|_p\neq1$. Then, by multiplying both equalities of
\eqref{1norm}, one gets that
 $|u_2u_3|_p=1$. Therefore, one has that $|u_2|_p=|u_3|_p^{-1}>1$. However,
 by substituting the last one into the second equality of \eqref{1norm}, we
 have that $|u_3|_p=1$ which is a contradiction.

Similarly, from the first equality of \eqref{2norm}, we
immediately get that $|u_2|_p=|b^2u_2+1|_p=1$. It then follows
that $
|u_2+b^2|_p=|b^2u_2+b^4|_p=|(b^2u_2+1)+(b^4-1)|_p=|b^2u_2+1|_p=1.
$ However, by substituting the last equality into the second
equation of \eqref{2norm}, one gets that $|u_3|_p=1$ which is
again a contradiction. These contradictions imply that
$|u_3|_p=1$.

Now, we are going to show that $|u_2|_p=1$.  Again assume the
contrary, i.e.  $|u_2|_p\neq1$. In this case, we have
$|b^2u_2+1|_p=|u_2+b^2|$. By multiplying the second and the third
equations of \eqref{tru}, one finds
$\frac{u_2u_3}{u_1}=a^2\left(\frac{b^2u_2+1}{u_2+b^2}\right)^k$.
The last equality with $|u_3|_p=1$ yields that
$|u_1|_p=|u_2|_p\neq 1$. It then follows from the first equation
of \eqref{tru} that
$|u_1|_p=\frac{|b^2u_3+1|_p}{|u_3+b^2|_p}\neq1$. On the other
hand, one has that $|b^2u_3+1|_p=1$ if and only if
$|u_3+b^2|_p=1$. Therefore, it implies that $|b^2u_3+1|_p<1$. From
this inequality together with the third equation of \eqref{tru}
one finds that $
|u_3|_p<\left(\frac{|u_1|_p}{|u_2+b^2|_p}\right)^k. $ Since
$|u_1|_p=|u_2|_p\neq 1,$ one has that $
\left(\frac{|u_1|_p}{|u_2+b^2|_p}\right)^k\leq1. $ Hence, we
obtain that $|u_3|_p<1$ which is a contradiction. This completes
the proof.
\end{proof}

\begin{prop}\label{solutionnorm1}
The following statements hold:\\
$(i)$ One has that $\textbf{Fix}(F)\subset(\ce_p\times\ce_p\times\ce_p)\bigcup(\bq_p\times(-\ce_p)\times(-\ce_p))$;\\
$(ii)$ If $\textbf{Fix}(F)\setminus(\ce_p\times\ce_p\times\ce_p)\neq\emptyset$ then $-1$
is the $k^{th}$ residue of module $p$.
\end{prop}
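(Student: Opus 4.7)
The plan is to exploit Proposition~\ref{solutionnorm}, which gives $|u_2|_p=|u_3|_p=1$, so that the reductions $\bar u_2,\bar u_3\in\mathbb{F}_p^{\ast}$ are well defined. Since $a=\exp_p(J)$ and $b=\exp_p(J_1)$ lie in $\ce_p$ by Lemma~\ref{21}, and $\ce_p$ coincides with $1+p\bz_p$ for $p\geq 3$, the coupling constants satisfy $a^2\equiv b^2\equiv 1\pmod p$. Because $\ce_p=1+p\bz_p$, assertion (i) reduces to showing that the pair $(\bar u_2,\bar u_3)$ in $\mathbb{F}_p^{\ast}\times\mathbb{F}_p^{\ast}$ lies in $\{(1,1),(-1,-1)\}$, and I intend to establish this by a case analysis on whether $\bar u_2,\bar u_3$ equal $-1$ modulo $p$.

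In the generic case $\bar u_2\neq -1$ and $\bar u_3\neq -1$, every linear factor $b^2u_i+1$ or $u_i+b^2$ appearing in \eqref{tru} has nonzero reduction mod $p$, so one simply reduces the system. The reduction of the first equation immediately gives $\bar u_1=1$. Multiplying the reductions of the second and third equations causes nearly every factor to cancel and yields $\bar u_2\bar u_3=1$; substituting $\bar u_2=\bar u_3^{-1}$ back into the reduced second equation collapses it to $\bar u_3=1$. Hence $(u_1,u_2,u_3)\in\ce_p^3$.

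The mixed cases, where exactly one of $\bar u_2,\bar u_3$ equals $-1$, are the crux of the argument, because the naive reductions of \eqref{tru} there degenerate to $0/0$. I rule them out by computing $|u_1|_p$ from two different equations of \eqref{tru} by means of the strong triangle inequality and comparing. If $\bar u_3=-1$ and $\bar u_2\neq -1$, then $|b^2u_2+1|_p=|u_2+b^2|_p=1$, so taking $p$-adic absolute values in the second and third equations of \eqref{tru} after clearing denominators yields
\[
|u_1|_p=\frac{1}{|u_3+b^2|_p}=\frac{1}{|b^2u_3+1|_p},
\]
whence $|u_3+b^2|_p=|b^2u_3+1|_p$; plugging this identity into the first equation of \eqref{tru} gives $|u_1|_p=1$, contradicting $|u_3+b^2|_p<1$. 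The symmetric case $\bar u_2=-1$, $\bar u_3\neq -1$ is shorter: the first equation of \eqref{tru} immediately gives $|u_1|_p=1$, and then the norm of the third equation forces $|u_2+b^2|_p=1$, contradicting $u_2\equiv -1\pmod p$. This leaves only $\bar u_2=\bar u_3=-1$, i.e.\ $u_2,u_3\in-\ce_p$, which proves (i).

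For (ii), assume we are in the remaining case $u_2,u_3\in-\ce_p$. Rewrite the second equation of \eqref{tru} as $u_2=a^2X^k$ with
\[
X=\frac{(b^2u_2+1)u_3}{(u_3+b^2)u_1}.
\]
Taking absolute values gives $|X|_p^k=|u_2|_p/|a|_p^2=1$, so $|X|_p=1$ and the reduction $\bar X\in\mathbb{F}_p^{\ast}$ is well defined. Reducing modulo $p$ and using $\bar a=1$ then produces $\bar X^k=\bar u_2=-1$ in $\mathbb{F}_p$, which is exactly the statement that $-1$ is the $k^{th}$ residue modulo $p$. The main technical difficulty is concentrated in the mixed-case analysis in part (i), where the strong triangle inequality replaces the direct mod-$p$ reduction that one would like to use but which degenerates.
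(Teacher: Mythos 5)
Your proposal is correct and follows essentially the same route as the paper's proof: both rest on Proposition \ref{solutionnorm}, a case analysis according to the residues of $u_2,u_3$ modulo $p$ (equivalently, membership in $\pm\ce_p$), multiplying the second and third equations of \eqref{tru} to get $u_2u_3\equiv 1$, and strong-triangle-inequality norm comparisons to exclude the mixed cases. The only differences are presentational: you reduce the system explicitly to $\mathbb{F}_p$ (using $\ce_p=1+p\bz_p$ for $p\geq 3$) where the paper chains $\ce_p$-memberships of the relevant fractions, and in part $(ii)$ you argue directly from the second equation, whereas the paper argues by contraposition using the third.
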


\begin{proof}
$(i)$ Let $\mathbf{u}=(u_1,u_2,u_3)\in\textbf{Fix}(F)$ be a
solution of the system \eqref{tru}.

We first show that if $u_3\not\in-\ce_p$ then $\mathbf{u}\in\ce_p^3$. Indeed, due to
Proposition \ref{solutionnorm}, we have that $|u_2|_p=|u_3|_p=1,$ $|u_3+1|_p=1$, and
$$
\frac{b^2u_3+1}{u_3+b^2}=\frac{b^2+\frac{1-b^2}{u_3+1}}{1+\frac{b^2-1}{u_3+1}}\in\ce_p.
$$
By using these, from the first equation of \eqref{tru} one finds
that $u_1\in\ce_p$. Due to $|u_1|_p=|u_2|_p=|u_3|_p=1$ together
with the second equation of \eqref{tru} we get that
$|b^2u_2+1|_p=1$, i.e. $u_2\not\in-\ce_p$. From
$$
u_2u_3=a^2u_1\left(\frac{b^2u_2+1}{u_2+b^2}\right)^k, \quad \frac{b^2u_2+1}{u_2+b^2}=
\frac{b^2+\frac{1-b^2}{u_2+1}}{1+\frac{b^2-1}{u_2+1}}\in\ce_p
$$
it follows that $u_2\cdot u_3\in\ce_p$. This means that
$$
\frac{b^2u_3+1}{(u_2+b^2)u_3}=\frac{b^2u_3+1}{u_2u_3+b^2u_3}=\frac{1}{1+\frac{u_2u_3-1}{b^2u_3+1}}\in\ce_p.
$$
Hence, from the third equation of \eqref{tru} one gets
$u_3\in\ce_p$, and consequently, $u_2\in\ce_p$.

Now, we want to show that if $u_3\in-\ce_p$ then $u_2\in-\ce_p$.
We assume the contrary, i.e. $u_2\not\in-\ce_p$. From
$|u_2|_p=|u_3|_p=1$ and
$$
u_2u_3=a^2u_1\left(\frac{b^2u_2+1}{u_2+b^2}\right)^k, \quad \frac{b^2u_2+1}{u_2+b^2}=
\frac{b^2+\frac{1-b^2}{u_2+1}}{1+\frac{b^2-1}{u_2+1}}\in\ce_p
$$
we get that $|u_1|_p=1$. By substituting it, into the third
equation of \eqref{tru}, one has that $|b^2u_3+1|_p=1$ which
contradicts to $u_3\in-\ce_p$. This means that $u_2\in-\ce_p$.

$(ii)$ Assume that a solution $\mathbf{u}=(u_1,u_2,u_3)$ of the
system \eqref{tru} does not belong to $\ce_p^3$. Then we want to
show that $-1$ is the $k^{th}$ residue of module $p$. Again we
suppose the contrary, i.e. $\alpha^k\not\equiv-1(\operatorname{mod
}p)$ for all $\alpha\in\{1,2,\dots,p-1\}$. Due to Proposition
\ref{solutionnorm}, we have that $|u_2|_p=|u_3|_p=1$. Let
$y=\frac{u_3}{a^2}$ and $z=\frac{(b^2u_3+1)u_1}{(u_2+b^2)u_3}$. It
is clear that $|y|_p=1$ and it follows form the third equation of
\eqref{tru} that $z^k=y$, so, $|z|_p=1$. Therefore, one has
$z=z_0+z_1p+z_2p^2+\cdots$ and $y=y_0+y_1p+y_2p^2+\cdots$ such
that $y_0\equiv z_0^k\not\equiv-1(\operatorname{mod }p)$.
Consequently, from $u_3=a^2y$ we obtain that $u_3\notin-\ce_p$.
This, according to the previous case, yields that
$\mathbf{u}\in\ce_p^3$ which is a contradiction. This completes
the proof.
\end{proof}

\begin{rem} According to Theorem \ref{pro1}, if $-1$ \textit{is not} the $k^{th}$ residue of
module $p$ then any translation-invariant $p$-adic quasi Gibbs measure is a $p$-adic Gibbs measure.
\end{rem}

Our next aim is to show that
$\textbf{Fix}(F)\cap(\ce_p\times\ce_p\times\ce_p)\neq\emptyset.$
For that purpose, we are searching for a solution of the system of
equations \eqref{tru} in the form $u_1=u_2=u_3=u$. In that case,
the system  \eqref{tru} reduces to
\begin{equation}\label{u_1=u_2=_u3}
u=a^2\left(\frac{b^2u+1}{u+b^2}\right)^k.
\end{equation}

Let us consider a function $f_{a,b,k}:\bq_p\to\bq_p$ defined by
\begin{eqnarray}\label{function}
f_{a,b,k}(u)=a^2\left(\cfrac{b^2u+1}{u+b^2}\right)^k, \quad b\neq1
\end{eqnarray}

Note that in the real setting, the fixed points and dynamics of
the function $f$ defined by \eqref{function} were studied very
well in the classical textbooks of statistical mechanics (for the
latest book see \cite{Roz}). Such a function \eqref{function} is
called \textit{the Ising-Potts mapping}. The Ising-Potts mapping
may exhibit a chaotic behavior (see \cite{BGR}, \cite{Monroe}).
Here, in the $p$-adic field, we would like to examine the set
$\textbf{Fix}(f_{a,b,k})=\{u\in\bq_p: f_{a,b,k}(u)=u \}$ of fixed
points of the function \eqref{function}. The dynamics of the
function $f_{a,b,k}$ over the $p$-adic field will be studied
elsewhere.

\begin{thm}\label{fixf}
Let $p\geq3$, $a,b\in\ce_p,\ b\neq1$ and
$f_{a,b,k}:\bq_p\to\bq_p$ be a function defined by
\eqref{function}. Then the following statements hold:

\begin{itemize}
\item[$(i)$] One has that
$\textbf{Fix}(f_{a,b,k})\subset\ce_p\cup(-\ce_p)$ and
$|\textbf{Fix}(f_{a,b,k})\cap\ce_p|=1;$ \item[$(ii)$]  If
$\frac{p-1}{(k,p-1)}$ is odd then
$\textbf{Fix}(f_{a,b,k})\cap(-\ce_p)=\emptyset;$ \item[$(iii)$] If
$|k|_p=1$ and $\frac{p-1}{(k,p-1)}$ is even then
$|\textbf{Fix}(f_{a,b,k})\cap(-\ce_p)|= (k,p-1).$ \item[$(iv)$] If
$k$ is odd and $\max\{|k|_p, |a-1|_p\}<|b-1|_p$ then
$|\textbf{Fix}(f_{a,b,k})\cap(-\ce_p)|\geq 1.$
\end{itemize}
\end{thm}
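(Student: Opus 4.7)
The plan is to translate the fixed-point problem for $f_{a,b,k}$ into the study of $\bq_p$-roots of the polynomial $P(u)=u(u+b^2)^k-a^2(b^2u+1)^k$ of degree $k+1$, which must lie in $\bz_p^*$.

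For (i), I would first rule out $|u|_p\neq 1$: both $|u|_p>1$ and $|u|_p<1$ force $|f_{a,b,k}(u)|_p=1$ via the non-Archimedean triangle inequality, contradicting $u=f_{a,b,k}(u)$. Reducing modulo $p$ and using $a,b\in\ce_p$, one sees that whenever $u\not\equiv-1\pmod p$, the quotient $(b^2u+1)/(u+b^2)$ is congruent to $1\pmod p$, and therefore $u\equiv 1\pmod p$, i.e.\ $u\in\ce_p$; this gives $\textbf{Fix}(f_{a,b,k})\subset\ce_p\cup(-\ce_p)$. For the uniqueness part, I apply Lemma \ref{hl} to $P$ at $u_0=1$: direct computation yields $P(1)=(1-a^2)(1+b^2)^k$ and $P'(1)=(1+b^2)^{k-1}[(1+b^2)+k(1-a^2b^2)]$, whence $|P(1)|_p=|a-1|_p<1$ and $|P'(1)|_p=1$ for $p$ odd; Hensel's lemma then provides the unique root in $\ce_p$.

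For (ii) and (iii), I would substitute $u=-v$, $v=1+pw\in\ce_p$, and $b^2=1+p^sB_0$ with $B_0\in\bz_p^*$. Cross-multiplying the fixed-point equation $v(b^2-v)^k=(-1)^{k+1}a^2(b^2v-1)^k$, dividing by the common factor $p^{sk}$, and reducing modulo $p$ yields the key congruence $(w-B_0)^k+(w+B_0)^k\equiv 0\pmod p$. Writing this as $\bigl[(w+B_0)/(w-B_0)\bigr]^k\equiv -1\pmod p$ shows that a solution requires a $k$-th root of $-1$ in $\bbf_p^*$, i.e.\ $(p-1)/(k,p-1)$ to be even by the criterion cited from \cite{Rosen}. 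A short 2-adic valuation argument shows $(p-1)/(k,p-1)$ is automatically even whenever $k$ and $p$ are both odd, so the hypothesis of (ii) forces $k$ to be even; the non-existence of $\sqrt[k]{-1}$ in $\bbf_p$ then delivers the required contradiction. For (iii), the $(k,p-1)$ values of $\rho\in\bbf_p^*$ with $\rho^k=-1$ produce $(k,p-1)$ distinct values $w\equiv B_0(\rho+1)/(\rho-1)\pmod p$; each lifts uniquely to a root of the polynomial $R(w):=P(-1+pw)/p^{sk}$ by Lemma \ref{hl}, since $|k|_p=1$ and a direct computation gives $R'(w)\not\equiv 0\pmod p$ at each of these points. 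The subcase $s\geq 2$ and the degenerate possibility $w\equiv\pm B_0\pmod p$ will require additional bookkeeping but do not alter the final count.

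For (iv), I would perform a Newton-polygon analysis on $\tilde P(w):=P(-1+w)=\sum_{j=0}^{k+1}c_jw^j$. Setting $B=b^2-1$, explicit computation gives $c_0=(a^2-1)B^k$ and $c_1=B^k-kB^{k-1}(1+a^2b^2)$, and under $|k|_p<|B|_p$ one finds $|c_0|_p=|a-1|_p|B|_p^k$ while $|c_1|_p=|B|_p^k$. The core step is showing that every intermediate vertex $(j,v_p(c_j))$ with $2\leq j\leq k+1$ lies on or above the line through $(0,v_p(c_0))$ and $(1,v_p(c_1))$, which has slope $-v_p(a-1)$. Splitting by the parity of $j$ (so that $|a^2(-b^2)^j-1|_p=1$ for $j$ odd but $\leq|b-1|_p$ for $j$ even) and invoking Kummer's bound $v_p(\binom{k}{j})\geq v_p(k)-v_p(j)$, the verification reduces to the inequalities $v_p(k)+(j-1)v_p(a-1)\geq jv_p(B)+v_p(j)$ (and an analogous bound for even $j$), both of which follow from $v_p(k)>v_p(B)$ and $v_p(a-1)>v_p(B)$ together with the trivial estimate $v_p(j)<j$. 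The Newton polygon thus has a leftmost segment of slope $-v_p(a-1)$, producing a root $w\in\bq_p$ with $|w|_p=|a-1|_p<|B|_p$, and consequently $u=-1+w\in-\ce_p$ is the desired fixed point. The main obstacle is the uniform control of $v_p(\binom{k}{j})$ across all $j\leq k+1$, for which Kummer's formula is the decisive tool.
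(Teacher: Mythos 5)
Your route differs from the paper's in all four parts: the paper proves uniqueness in $\ce_p$ by showing that $g_{a,b,k}(x)=\frac{a^2b^2x^k+1}{a^2x^k+b^2}$ is a contraction on $\ce_p$ (after the substitution $u=a^2x^k$), proves (ii)--(iii) by applying Hensel's lemma to the single polynomial $\frak{g}(x)=a^2x^{k+1}-a^2b^2x^k+b^2x-1$ at each residue $\alpha$ with $\alpha^k\equiv-1(\operatorname{mod}p)$, and proves (iv) by a contraction argument on the ball $B(-1,|b-1|_p)$. Your part (i) (Hensel at $u=1$ for $P(u)=u(u+b^2)^k-a^2(b^2u+1)^k$; indeed $|P(1)|_p=|a-1|_p<1$ and $|P'(1)|_p=1$) is correct and comparable in simplicity, and your Newton-polygon argument for (iv) can be made to work (the intermediate points do lie \emph{strictly} above the line, which is what you actually need: an edge of horizontal length one and integer slope is required to produce a root in $\bq_p$ rather than in an extension, so ``on or above'' as written is not quite the right condition; also the degenerate case $a=1$, where $c_0=0$ and $u=-1$ itself is the fixed point, needs a separate sentence).

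The genuine gap is in (ii)--(iii), and it is not mere bookkeeping. Your key congruence $(w-B_0)^k+(w+B_0)^k\equiv0(\operatorname{mod}p)$, obtained by dividing by $p^{sk}$ after writing $v=1+pw$, $b^2=1+p^sB_0$, is literally valid only when $s=1$, i.e. $|b-1|_p=1/p$. When $|b-1|_p\leq p^{-2}$ the correct rescaling is by $p^{k}$, and the reduced polynomial $R(w)=P(-1-pw)/p^{k}$ satisfies $R(w)\equiv 2w^k(\operatorname{mod}p)$ for $k$ odd: every residue collapses to the multiple root $w\equiv0$, so neither your congruence nor a single application of Lemma \ref{hl} to $R$ locates the fixed points or certifies the count. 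This degenerate region is not empty: when $k$ is odd, the fixed point corresponding to the residue $\rho=-1$ lies within $|a-1|_p$ of $-1$, hence inside $|u+1|_p<|b-1|_p$ whenever $|a-1|_p$ is small, exactly where your mod-$p$ analysis is blind; establishing that it exists, is unique there, and that nothing else hides there requires a further Hensel/Newton analysis at greater depth (and your exactness claim in (iii) also needs the converse direction, that every fixed point in $-\ce_p$ reduces to one of your $(k,p-1)$ residues). For (ii) your observation that the hypothesis forces $k$ even does rescue the argument (all degenerate subcases then give $1\equiv-1$ or $2w_0^k\equiv0$ contradictions), but you would have to write this out. The paper's substitution $u=a^2x^k$ avoids the whole issue: the Hensel data $\frak{g}(\alpha)\equiv0$, $\alpha\frak{g}'(\alpha)\equiv\alpha(b^2-a^2)+ka^2(b^2-\alpha)\not\equiv0(\operatorname{mod}p)$ are uniform in $|b-1|_p$, which is exactly the advantage your parametrization loses.
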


\begin{proof} $(i)$ Due to Proposition \ref{solutionnorm}, one has that
$\textbf{Fix}(f_{a,b,k})\subset\ce_p\cup(-\ce_p)$. Let us show
that $\left|\mbox{\textbf{Fix}}(f_{a,b,k})\cap\ce_p\right|=1.$
Putting $y=\frac{u}{a^2}$ from \eqref{u_1=u_2=_u3} one gets that
\begin{equation}\label{y=x^k}
y=\left(\frac{a^2b^2y+1}{a^2y+b^2}\right)^k.
\end{equation}
This yields that any solution $y$ of the equation \eqref{y=x^k} is
the $k^{th}$ power of some $p$-adic number $x\in\bq_p$, i.e.
$y=x^k$. It follows from \eqref{y=x^k} that
\begin{equation}\label{x^k}
x^k=\left(\frac{a^2b^2x^k+1}{a^2x^k+b^2}\right)^k\quad \mbox{or} \quad \left(\frac{a^2b^2x^k+1}{a^2x^{k+1}+xb^2}\right)^k=1
\end{equation}
Let $\eta=\frac{a^2b^2x^k+1}{a^2x^{k+1}+xb^2}$, then $\eta$ is a
solution of the equation $\eta^k=1$. The last equation has the
following solutions $\eta_1 (=1), \eta_2, \cdots, \eta_d$ in
$\bq_p$, where $d=(k,p-1).$ Therefore, we have
\begin{equation}\label{etaneq1}
x=\eta_i\frac{a^2b^2x^k+1}{a^2x^k+b^2}, \quad i=\overline{1,d}
\end{equation}
As for two different $i_1\neq i_2$, the corresponding solutions $x_{i_1}, x_{i_2}$
of the equation \eqref{etaneq1} satisfy the condition $\frac{x_{i_1}}{\eta_{i_1}}=
\frac{x_{i_2}}{\eta_{i_2}}$. Therefore, it is enough to study the equation \eqref{etaneq1} for $\eta_1=1$.

Let $g_{a,b,k}:\bq_p\to\bq_p$ be a function given by
\begin{equation}\label{g(x)}
g_{a,b,k}(x)=\frac{a^2b^2x^k+1}{a^2x^k+b^2}.
\end{equation}
Consequently, we conclude that
\begin{equation}\label{fixf=fixg}
\mbox{\textbf{Fix}}(f_{a,b,k})=\left\{u: u=a^2x^k,\
x\in\mbox{Fix}(g_{a,b,k})\right\}.
\end{equation}

Due to $\textbf{Fix}(f_{a,b,k})\subset\ce_p\cup(-\ce_p)$, it
follows from \eqref{fixf=fixg} that
$$
\textbf{Fix}(g_{a,b,k})\subset\left\{x\in\bz^{*}_p: x_0^k\equiv
1(\operatorname{mod }p)\right\} \cup\left\{x\in\bz^{*}_p:
x_0^k\equiv-1(\operatorname{mod }p)\right\}.
$$
Let $x\in \textbf{Fix}(g_{a,b,k})\cap\bz^{*}_p$ such that
$x_0^k\equiv1(\operatorname{mod }p)$. We then get that
$\left|x^k+1\right|_p=1$. This implies that
\begin{equation}\label{gincep}
x=\frac{a^2b^2x^k+1}{a^2x^k+b^2}=b^2\frac{1-\frac{a^2b^2-1}{a^2b^2(x^k+1)}}{1+\frac{b^2-a^2}{a^2(x^k+1)}}\in\ce_p.
\end{equation}
Thus, we obtain
\begin{equation}\label{fixg}
\textbf{Fix}(g_{a,b,k})\subset\ce_p\cup\left\{x\in\bz^{*}_p:
x_0^k\equiv-1(\operatorname{mod }p)\right\}.
\end{equation}

It is clear that $g_{a,b,k}(\ce_p)\subset\ce_p$. For any
$x,y\in\ce_p$, we get that
\begin{equation}\label{f(x)-f(y)}
g_{a,b,k}(x)-g_{a,b,k}(y)=\frac{a^2(b^4-1)\sum\limits_{i=0}^{k-1}x^{k-1-i}y^i}{(a^2x^k+b^2)(a^2y^k+b^2)}(x-y).
\end{equation}
According to Lemma \ref{epproperty} we have that
\begin{equation}\label{normss}
\left\{\begin{array}{ll}
\left|b^4-1\right|_p\leq\frac{1}{p},& \left|\sum\limits_{i=0}^{k-1}x^{k-1-i}y^i\right|_p\leq1,\\
\left|a^2x^k+b^2\right|_p=1,& \left|a^2y^k+b^2\right|_p=1.
\end{array}\right.
\end{equation}
By using \eqref{normss} and \eqref{f(x)-f(y)}, we obtain that
$$
\left|g_{a,b,k}(x)-g_{a,b,k}(y)\right|_p\leq\frac{1}{p}|x-y|_p.
$$
This means that $g_{a,b,k}$ is a contraction on $\ce_p$. Since
$\ce_p$ is compact, the function $g_{a,b,k}$ has a unique fixed
point in $\ce_p$. Hence,  due to \eqref{fixf=fixg}, we have that
$\left|\textbf{Fix}(f_{a,b,k})\cap\ce_p\right|=1.$

$(ii)$ We know that $\sqrt[k]{-1}$ does not exist in $\mathbb F_p$
if and only if $\frac{p-1}{(k,p-1)}$ is odd. In this case, due to
\eqref{fixf=fixg} and \eqref{fixg}, we have that
$\textbf{Fix}(f_{a,b,k})\cap(-\ce_p)=\emptyset;$.

$(iii)$ Let $(k,p)=1$ and $\frac{p-1}{(k,p-1)}$ is even.  Let us
establish that
$\left|\textbf{Fix}(f_{a,b,k})\cap(-\ce_p)\right|=(k,p-1).$ Under
our assumption, $\sqrt[k]{-1}$ exists in $\mathbb F_p$. Let
$\alpha$ be some solution of the congruent equation
$\alpha^k+1\equiv 0 (mod \ p)$.   From $g_{a,b,k}(x)=x$ one finds
that
\begin{equation}\label{g(x)=x}
\frak{g}(x)\equiv a^2x^{k+1}-a^2b^2x^k+b^2x-1=0.
\end{equation}
Since $\alpha\not\equiv1(\operatorname{mod }p)$ and $(k,p)=1$,
we then have that
\begin{eqnarray*}
\frak{g}(\alpha)=-a^2\alpha+a^2b^2+b^2\alpha-1=\alpha(b^2-a^2)+a^2b^2-1\equiv 0(\operatorname{mod }p),\\
\alpha
\frak{g}'(\alpha)=-(k+1)a^2\alpha+ka^2b^2+b^2\alpha=\alpha(b^2-a^2)+ka^2(b^2-\alpha)\not\equiv
0(\operatorname{mod }p).
\end{eqnarray*}
According to the Hensel's lemma there exists a unique $p$-adic integer $x_\alpha$ such that
$$\frak{g}(x_\alpha)=0,\ \ \ \ x_\alpha\equiv\alpha(\operatorname{mod }p).$$
Consequently, for each solution of the congruent equation
$\alpha^k+1\equiv0(\operatorname{mod }p)$, there exists a unique
solution $x_{\alpha}$ of $g_{a,b,k}(x)=x$ such that
$x_\alpha\equiv\alpha(\operatorname{mod }p)$. Hence
$\left|\textbf{Fix}(g_{a,b,k})\cap(-\ce_p)\right|=(k,p-1)$ or
equivalently $
\left|\textbf{Fix}(f_{a,b,k})\cap(-\ce_p)\right|=(k,p-1). $

$(iv)$ Let $k$ be odd and $r=|b-1|_p$. We then have that
$f_{a,b,k}(-1)=-a^2$ and
\begin{equation}\label{fu-fv}
f_{a,b,k}(u)-f_{a,b,k}(v)=\frac{a^2(b^4-1)\sum\limits_{i=0}^{k-1}[(b^2u+1)(v+b^2)]^{k-1-i}[(b^2v+1)(u+b^2)]^{i}}{(u+b^2)^k(v+b^2)^k}(u-v).
\end{equation}
for all $u,v\in B(-1,r)$. From $|u+1|_p<|b-1|_p$ and
$|v+1|_p<|b-1|_p$ one has
\begin{equation}\label{normsuv}
\left\{\begin{array}{ll}
\left|b^2u+1\right|_p=\left|b^2v+1\right|_p=\left|u+b^2\right|_p=\left|v+b^2\right|_p=|b-1|_p,\\[3mm]
\left|\sum\limits_{i=0}^{k-1}[(b^2u+1)(v+b^2)]^{k-1-i}[(b^2v+1)(u+b^2)]^{i}\right|_p\leq\left|k(b-1)^{2k-2}\right|_p.
\end{array}\right.
\end{equation}
By plugging \eqref{normsuv} into \eqref{fu-fv}, we obtain
\begin{equation}\label{derf}
\left|f_{a,b,k}(u)-f_{a,b,k}(v)\right|_p\leq\frac{|k|_p}{|b-1|_p}\cdot|u-v|_p.
\end{equation}
Since $\max\{|k|_p,|a-1|_p\}<|b-1|_p$, one has that $-a^2\in
B(-1,r)$ and $f_{a,b,k}(B(-1,r))\subset B(-a^2,r_1)\subset
B(-1,r)$ with $r_1<r$. Thus, the function $f_{a,b,k}$ is a
contraction on $B(-1,r)$. Consequently, there exists a unique
fixed point in $B(-1,r)\subset(-\ce_p)$, i.e.
$\left|\textbf{Fix}(f_{a,b,k})\cap(-\ce_p)\right|\geq 1.$
\end{proof}

\begin{rem}
We stress that the equality
$\left|\textbf{Fix}(f_{a,b,k})\cap\ce_p\right|=1$ can be directly
proven by means of the results of \cite{MA3}.
\end{rem}

The following result immediately follows from Theorem \ref{fixf}.

\begin{cor}\label{cor(k,p)} Let $p\geq3$ and $a,b\in\ce_p,\ b\neq1$. The following statements hold true:

\begin{itemize}
\item[$(i)$] One has that $|\textbf{Fix}(F)\cap(\ce_p\times\ce_p\times\ce_p)|\geq1;$
\item[$(ii)$]  If $\frac{p-1}{(k,p-1)}$ is odd then  $\textbf{Fix}(F)\cap(\bq_p\times(-\ce_p)\times(-\ce_p))=\emptyset;$
\item[$(iii)$] If $|k|_p=1$ and $\frac{p-1}{(k,p-1)}$ is even then $|\textbf{Fix}(F)\cap((-\ce_p)\times(-\ce_p)\times(-\ce_p))|\geq (k,p-1).$
\item[$(iv)$] If $k$ is odd and $\max\{|k|_p, |a-1|_p\}<|b-1|_p$ then  $|\textbf{Fix}(F)\cap((-\ce_p)\times(-\ce_p)\times(-\ce_p))|\geq 1.$
\end{itemize}
\end{cor}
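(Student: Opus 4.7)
The plan is to exploit the diagonal reduction already set up in the preceding subsection: any scalar fixed point $u\in\textbf{Fix}(f_{a,b,k})$ of \eqref{u_1=u_2=_u3} lifts to a diagonal fixed point $(u,u,u)\in\textbf{Fix}(F)$ of the multi-variable map. A one-line substitution confirms this — when $u_1=u_2=u_3=u$ the $u_3$ in the second coordinate and the $u_1,u_3$ in the third coordinate cancel against the corresponding factors in the numerator or denominator, so each right-hand side of \eqref{tru} collapses to $a^2\bigl(\tfrac{b^2u+1}{u+b^2}\bigr)^k = f_{a,b,k}(u) = u$. Since the diagonal embedding sends $\ce_p$ into $\ce_p^3$ and $-\ce_p$ into $(-\ce_p)^3$, each part of Theorem \ref{fixf} transfers directly to a lower bound on the number of fixed points of $F$ in the corresponding product set.

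With this observation, items $(i)$, $(iii)$, $(iv)$ are immediate translations of the respective parts of Theorem \ref{fixf}. Part $(i)$ of the theorem supplies a fixed point in $\ce_p$, hence a diagonal element of $\textbf{Fix}(F)\cap\ce_p^3$. Part $(iii)$ supplies $(k,p-1)$ fixed points in $-\ce_p$, each producing a distinct diagonal element of $\textbf{Fix}(F)\cap(-\ce_p)^3$ (the $(k,p-1)$ diagonal lifts are distinct because the scalar fixed points are). Part $(iv)$ likewise yields at least one such diagonal fixed point in $(-\ce_p)^3$.

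Item $(ii)$ is a non-existence statement and therefore cannot come from the diagonal embedding alone. For this I would argue from Proposition \ref{solutionnorm1}: part $(i)$ of that proposition already restricts every element of $\textbf{Fix}(F)$ to $(\ce_p\times\ce_p\times\ce_p)\cup(\bq_p\times(-\ce_p)\times(-\ce_p))$, so it suffices to exclude the second summand. Part $(ii)$ of the same proposition shows that any fixed point in the second summand forces $-1$ to be a $k^{\text{th}}$ residue modulo $p$; combined with the Section~2 characterization that $\sqrt[k]{-1}$ exists in $\mathbb F_p$ if and only if $\frac{p-1}{(k,p-1)}$ is even, the oddness hypothesis rules out this case and yields $\textbf{Fix}(F)\cap(\bq_p\times(-\ce_p)\times(-\ce_p))=\emptyset$.

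There is no substantial obstacle here: the corollary is bookkeeping once Theorem \ref{fixf} and Proposition \ref{solutionnorm1} are available. The only point requiring a little care is verifying that the diagonal substitution really satisfies the second and third equations of \eqref{tru} (not only the first), since these equations are not symmetric in $u_1,u_2,u_3$; however, the cancellations above make this a routine check.
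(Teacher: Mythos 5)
Your proposal is correct and matches the paper's intended argument: the paper simply states that the corollary ``immediately follows from Theorem \ref{fixf}'', and the content of that remark is exactly your diagonal lift $u\mapsto(u,u,u)$ for items $(i)$, $(iii)$, $(iv)$, with the cancellations in the second and third equations of \eqref{tru} checked as you describe. Your handling of item $(ii)$ via Proposition \ref{solutionnorm1}$(ii)$ together with the criterion that $\sqrt[k]{-1}$ exists in $\mathbb F_p$ iff $\frac{p-1}{(k,p-1)}$ is even is the right (and in fact necessary) supplement, since Theorem \ref{fixf}$(ii)$ alone only excludes diagonal fixed points; you have merely made explicit what the paper leaves implicit.
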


The following theorem gives the precise description of the set
$\textbf{Fix}(F)\cap(\ce_p\times\ce_p\times\ce_p)$.

\begin{thm} \label{unique(k,p)}
Let $p\geq3$ and $a,b\in\ce_p,\ b\neq1$. One has that
$|\textbf{Fix}(F)\cap(\ce_p\times\ce_p\times\ce_p)|=1$ and
$$\textbf{Fix}(F)\cap(\ce_p\times\ce_p\times\ce_p)=(\textbf{Fix}(f_{a,b,k})\cap\ce_p)\times(\textbf{Fix}(f_{a,b,k})\cap\ce_p)\times(\textbf{Fix}(f_{a,b,k})\cap\ce_p).$$
\end{thm}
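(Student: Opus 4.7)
The plan is to deduce uniqueness from Hensel's lemma (Lemma \ref{hl}) applied to the polynomial reformulation of the system \eqref{tru}. The inclusion $\supseteq$ of the claimed identity is immediate: by Theorem \ref{fixf}$(i)$ the map $f_{a,b,k}$ has a unique fixed point $u^{\ast}\in \ce_{p}$, and the diagonal triple $(u^{\ast},u^{\ast},u^{\ast})$ trivially solves every equation of \eqref{tru}. The substance of the theorem is therefore the assertion that no further triple in $\ce_{p}^{3}$ can be a fixed point of $F$.

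First I would clear denominators in \eqref{tru} so as to reformulate the fixed-point problem as the polynomial system $\widetilde{\Phi}(\mathbf{u})=\Theta$, namely
\[
\widetilde{\Phi}_{1}(\mathbf{u})=u_{1}(u_{3}+b^{2})^{k}-a^{2}(b^{2}u_{3}+1)^{k},
\]
\[
\widetilde{\Phi}_{2}(\mathbf{u})=u_{2}u_{1}^{k}(u_{3}+b^{2})^{k}-a^{2}u_{3}^{k}(b^{2}u_{2}+1)^{k},
\]
\[
\widetilde{\Phi}_{3}(\mathbf{u})=u_{3}^{k+1}(u_{2}+b^{2})^{k}-a^{2}u_{1}^{k}(b^{2}u_{3}+1)^{k},
\]
all with coefficients in $\bz_{p}$ since $a,b\in \ce_{p}\subset \bz_{p}$. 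On $\ce_{p}^{3}$ every multiplicative factor $(u_{i}+b^{2})^{k}$ and $u_{i}^{k}$ is a $p$-adic unit (because $u_{i}\equiv 1$ and $u_{i}+b^{2}\equiv 2\pmod{p}$ for $p\geq 3$), so the zero set of $\widetilde{\Phi}$ inside $\ce_{p}^{3}$ coincides with $\textbf{Fix}(F)\cap \ce_{p}^{3}$. Moreover, for $p\geq 3$ the defining condition $|x-1|_{p}<p^{-1/(p-1)}$ of $\ce_{p}$ forces $|x-1|_{p}\leq 1/p$, so $\ce_{p}=1+p\bz_{p}$ and $\ce_{p}^{3}=\mathbf{a}+p\bz_{p}^{3}$ with $\mathbf{a}:=(1,1,1)$. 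It is therefore enough to show that $\widetilde{\Phi}$ has a unique zero in $\bz_{p}^{3}$ congruent to $\mathbf{a}$ modulo $p$.

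The first Hensel hypothesis $\widetilde{\Phi}(\mathbf{a})\equiv \Theta\pmod{p}$ is a short calculation: because $a^{2}\equiv b^{2}\equiv 1\pmod{p}$, each coordinate of $\widetilde{\Phi}(\mathbf{a})$ reduces to $(1+b^{2})^{k}(1-a^{2})\equiv 0\pmod{p}$. The decisive step is the verification of the second hypothesis $\det\mathbb{J}(\widetilde{\Phi}(\mathbf{a}))\not\equiv 0\pmod{p}$. Computing the nine partial derivatives at $\mathbf{a}$ and reducing with $a^{2}\equiv b^{2}\equiv u_{i}\equiv 1\pmod{p}$, I expect to obtain
\[
\mathbb{J}(\widetilde{\Phi}(\mathbf{a}))\equiv 2^{k-1}\begin{pmatrix} 2 & 0 & 0 \\ 2k & 2-k & -k \\ -2k & k & k+2 \end{pmatrix}\pmod{p};
\]
expansion along the first row then yields
\[
\det\mathbb{J}(\widetilde{\Phi}(\mathbf{a}))\equiv 2^{3k-2}\bigl[(2-k)(2+k)+k^{2}\bigr]\equiv 2^{3k}\pmod{p},
\]
which is a $p$-adic unit for every $p\geq 3$. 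The identity $(2-k)(2+k)+k^{2}=4$ is the fortunate cancellation that keeps the determinant away from zero regardless of $k$; I expect this Jacobian calculation to be the main technical obstacle, since one must exploit both the vanishing of $\partial_{u_{3}}\widetilde{\Phi}_{1}$ modulo $p$ (forced by $1-a^{2}b^{2}\equiv 0$) to obtain the triangular first row and the clean extraction of $2^{k-1}$ from the remaining entries.

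With both Hensel hypotheses established, Lemma \ref{hl} supplies a unique $\mathbf{u}^{\circ}\in \bz_{p}^{3}$ satisfying $\widetilde{\Phi}(\mathbf{u}^{\circ})=\Theta$ and $\mathbf{u}^{\circ}\equiv \mathbf{a}\pmod{p}$. Since $(u^{\ast},u^{\ast},u^{\ast})$ is visibly such a zero, we must have $\mathbf{u}^{\circ}=(u^{\ast},u^{\ast},u^{\ast})$, and consequently $\textbf{Fix}(F)\cap \ce_{p}^{3}=\{(u^{\ast},u^{\ast},u^{\ast})\}=(\textbf{Fix}(f_{a,b,k})\cap \ce_{p})^{3}$, which is exactly the identity stated in the theorem and in particular yields $|\textbf{Fix}(F)\cap \ce_{p}^{3}|=1$.
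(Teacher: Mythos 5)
Your proposal is correct and follows essentially the same route as the paper's own proof: clearing denominators to the polynomial system $G(\mathbf{u})=\Theta$, verifying the Hensel hypotheses at $(1,1,1)$ with the Jacobian determinant $\equiv 2^{3k}\pmod p$, and identifying the unique root with the diagonal fixed point $(u^{\ast},u^{\ast},u^{\ast})$. Your Jacobian computation and the factorization $2^{k-1}$ agree with the matrix displayed in the paper, so there is nothing to add.
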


\begin{proof}
In order to describe the set
$\textbf{Fix}(F)\cap(\ce_p\times\ce_p\times\ce_p)$, we have to
solve the system of equation \eqref{tru} in the set
$\ce_p\times\ce_p\times\ce_p$.  Let us define the polynomial
function $G:\bq^3_p\to \bq^3_p$
$G(\textbf{u})=(G_1(\textbf{u}),G_2(\textbf{u}),G_3(\textbf{u}))$
with $p-$adic integer coefficients as follows
$$
\begin{array}{ll}
G_1(\textbf{u})=u_1(u_3+b^2)^k-a^2(b^2u_3+1)^k,\\
G_2(\textbf{u})=u_1^ku_2(u_3+b^2)^k-a^2(b^2u_2+1)^ku_3^k,\\
G_3(\textbf{u})=(u_2+b^2)^ku_3^{k+1}-a^2u_1^k(b^2u_3+1)^k.
\end{array}
$$
It is clear that
$\textbf{N}_{G}(\ce_p\times\ce_p\times\ce_p)=\textbf{Fix}(F)\cap(\ce_p\times\ce_p\times\ce_p)$
where
$\textbf{N}_{G}(\ce_p\times\ce_p\times\ce_p)=\{\mathbf{u}\in\ce_p\times\ce_p\times\ce_p:
G(\mathbf{u})=\Theta\}$. So, we apply the generalized Hensel's
lemma in order to describe the set
$\textbf{N}_{G}(\ce_p\times\ce_p\times\ce_p)$.

It is easy to check that $G(1,1,1)\equiv \Theta \ ({mod} \ p)$ and
$det(\mathbb{J}(G(1,1,1))\equiv 2^{3k}\not\equiv0\ (mod \ p)$,
where
$$ \mathbb{J}(G(1,1,1)\equiv \left(
\begin{array}{ccc}
2^k & 0 & 0\\
2^kk & 2^{k-1}(2-k) & -2^{k-1}k\\
-2^kk & 2^{k-1}k & 2^{k-1}(k+2)
\end{array}\right) \ (mod \ p)$$

Thus, due to the generalized Hensel's Lemma \ref{hl} there exist a
unique root $\mathbf{u}_*$ of the polynomial equation
$G(\mathbf{u})=\Theta$ which satisfies the condition
$\textbf{u}_*\equiv(1,1,1) \ (mod \ p)$. Consequently,
$|\textbf{N}_{G}(\ce_p\times\ce_p\times\ce_p)|=|\textbf{Fix}(F)\cap(\ce_p\times\ce_p\times\ce_p)|=1.$

On the other hand, we already knew that the system of equation
\eqref{tru} has a solution of the form $\mathbf{u}=(u,u,u)$ in the
set $\ce_p\times\ce_p\times\ce_p$, where $u$ is a unique fixed of
the function $f$ given by \eqref{function} in the set $\ce_p$.
Therefore, we get that
$\textbf{Fix}(F)\cap(\ce_p\times\ce_p\times\ce_p)=(\textbf{Fix}(f_{a,b,k})\cap\ce_p)\times(\textbf{Fix}(f_{a,b,k})\cap\ce_p)\times(\textbf{Fix}(f_{a,b,k})\cap\ce_p).$
\end{proof}

\subsection{The existence of translation-invariant $p$-adic quasi Gibbs measures}

By means of Theorem \ref{pro1} and Corollary \ref{cor(k,p)}, we
may construct translation invariant $p$-adic quasi Gibbs measures.
\begin{thm}\label{existenceGM}
Let $p\geq3$ and $|J|_p\leq\frac{1}{p},\ 0<|J_1|_p\leq\frac{1}{p}$. The following
assertions hold true for the $p$-adic Ising-Vannimenus model \eqref{ham1} on a Cayley tree of any order $k$:
\begin{itemize}
\item[$(i)$] There always exists at least one
translation-invariant $p$-adic Gibbs measure, i.e.
$|\cg(H)_{TI}|\geq 1$; \item[$(ii)$] If $\frac{p-1}{(k,p-1)}$ is
odd then there is no translation-invariant $p$-adic quasi Gibbs
measure, i.e. $Q\cg(H)_{TI}\setminus \cg(H)_{TI}=\emptyset$; \item
[$(iii)$] If $|k|_p=1$ and $\frac{p-1}{(k,p-1)}$ is even then
there are at least $(k,p-1)-$ number of translation-invariant
$p$-adic quasi Gibbs measures, i.e. $|Q\cg(H)_{TI}\setminus
\cg(H)_{TI}|\geq (k,p-1)$; \item[$(iv)$] If $k$ is odd and
$\max\{|k|_p, |a-1|_p\}<|b-1|_p$ then there is at least one
translation-invariant $p$-adic quasi Gibbs measure, i.e.
$|Q\cg(H)_{TI}\setminus \cg(H)_{TI}|\geq 1$
\end{itemize}
\end{thm}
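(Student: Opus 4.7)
The plan is to read off each of the four assertions as an immediate consequence of the correspondence established in Theorem \ref{pro1} together with the counts provided by Corollary \ref{cor(k,p)}. Recall that by Theorem \ref{pro1} every $\mathbf u=(u_1,u_2,u_3)\in\textbf{Fix}(F)$ determines a unique translation-invariant $p$-adic quasi Gibbs measure $\mu_{\mathbf u}$, and $\mu_{\mathbf u}\in\cg(H)_{TI}$ if and only if $\mathbf u\in\ce_p^3$; moreover, the explicit formula \eqref{mu_u} shows that the map $\mathbf u\mapsto\mu_{\mathbf u}$ is injective, so counting fixed points amounts to counting measures.

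First I would verify the ambient hypotheses of Corollary \ref{cor(k,p)}. Because $p\geq 3$ gives $\frac{1}{p-1}\leq\frac{1}{2}$, we have $p^{-1/(p-1)}\geq p^{-1/2}>p^{-1}$; hence the standing assumptions $|J|_p\leq\frac{1}{p}$ and $|J_1|_p\leq\frac{1}{p}$ place $J$ and $J_1$ inside the convergence disk $B(0,p^{-1/(p-1)})$ of $\exp_p$. Lemma \ref{21} then yields $a=\exp_p(J)\in\ce_p$ and $b=\exp_p(J_1)\in\ce_p$, while $|b-1|_p=|J_1|_p>0$ gives $b\neq 1$. Consequently Corollary \ref{cor(k,p)} and Propositions \ref{solutionnorm}--\ref{solutionnorm1} are all applicable.

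Assertion (i) follows from Corollary \ref{cor(k,p)}(i): the fixed point in $\ce_p^3$ it provides is carried by Theorem \ref{pro1} to a translation-invariant $p$-adic Gibbs measure. For (ii), Corollary \ref{cor(k,p)}(ii) combined with Proposition \ref{solutionnorm1}(i) forces $\textbf{Fix}(F)\subset\ce_p^3$, so every translation-invariant quasi Gibbs measure is automatically a Gibbs measure. For (iii) and (iv), Corollary \ref{cor(k,p)}(iii)--(iv) furnish fixed points lying entirely in $(-\ce_p)^3$; since for $p\geq 3$ one has $\ce_p\cap(-\ce_p)=\emptyset$ (elements of $\ce_p$ reduce to $1\pmod p$ and elements of $-\ce_p$ to $-1\pmod p$, which are distinct residues for odd $p$), each such fixed point produces via Theorem \ref{pro1} a translation-invariant quasi Gibbs measure that is \emph{not} a Gibbs measure, yielding the stated lower bounds $(k,p-1)$ and $1$ respectively.

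The substantive work — the structural constraints on $\textbf{Fix}(F)$ extracted in Propositions \ref{solutionnorm}--\ref{solutionnorm1}, the contraction-mapping analysis of $g_{a,b,k}$ on $\ce_p$, and the Hensel-type counts of fixed points in $(-\ce_p)$ — has already been packaged into Corollary \ref{cor(k,p)}, so no new obstacle arises here. The proof of Theorem \ref{existenceGM} is therefore a straightforward assembly of these ingredients through the injective correspondence $\mathbf u\mapsto\mu_{\mathbf u}$ of Theorem \ref{pro1}.
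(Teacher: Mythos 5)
Your proposal is correct and follows exactly the route the paper intends: the paper gives no separate proof of Theorem \ref{existenceGM}, stating only that it follows from Theorem \ref{pro1} and Corollary \ref{cor(k,p)}, and your assembly (checking $a,b\in\ce_p$, $b\neq1$ from the hypotheses on $J,J_1$, invoking Proposition \ref{solutionnorm1}(i) for part (ii), and using $\ce_p\cap(-\ce_p)=\emptyset$ for $p\geq3$ to separate Gibbs from non-Gibbs measures) supplies precisely the routine details left implicit. Your explicit remark on the injectivity of $\mathbf u\mapsto\mu_{\mathbf u}$ via \eqref{mu_u} is a small point the paper takes for granted, so nothing is missing.
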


\subsection{The existence of periodic $p$-adic quasi Gibbs measures}

All previous results were concerned with translation-invariant
$p$-adic (quasi) Gibbs measures. In this section, we are aiming to
show an existence of periodic (non-translation invariant) $p$-adic
Gibbs measures for the $p$-adic Ising-Vannimenus model on a Cayley
tree of order two. Recall that $ G_2=\left\{x\in\G_+^2:
d(x,x^0)\equiv0(\operatorname{mod }2)\right\} $ is a sub-semigroup
of $\G_+^2$. The function ${\bf u}=\{{\bf
u}_{xy}\}_{\langle{x,y}\rangle\in L}$ is called two periodic if
$\textbf{u}(\tilde\tau_g(l))=\textbf{u}(l), \forall \ g\in G_2,
l\in L$.

\begin{thm}\label{perGM}
Let $p\equiv1\ (mod \ 4)$ and $|J|_{p}\leq\frac{1}{p}$,
$0<\left|J_{1}\right|_{p}\leq\frac{1}{p}$. Then there exist
2-periodic $p$-adic quasi Gibbs measures for the $p$-adic
Ising-Vannimenus model on a Cayley tree of order two.
\end{thm}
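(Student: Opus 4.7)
The plan is to look for 2-periodic solutions of the system \eqref{canonic_u} on $\G^2_+$ in the simplest possible form: set $\mathbf{u}_{xy}=(\alpha,\alpha,\alpha)$ whenever $x\in W_n$ with $n$ even, and $\mathbf{u}_{xy}=(\beta,\beta,\beta)$ whenever $n$ is odd. Substituting this ansatz into the three coordinate equations of \eqref{canonic_u} with $k=2$, one checks that all three collapse to the single scalar relation $\alpha=f(\beta)$, $\beta=f(\alpha)$, where $f(u)=a^{2}\bigl(\tfrac{b^{2}u+1}{u+b^{2}}\bigr)^{2}$ is the Ising--Potts mapping of Theorem~\ref{fixf}. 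Non-translation-invariance of the induced measure is exactly the requirement $\alpha\neq\beta$, so by Theorem~\ref{pro1} the whole task reduces to producing a genuine 2-cycle $\{\alpha,\beta\}\subset\bq_p$ of $f$.

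To capture 2-cycles algebraically I would use Vieta. If $\alpha=f(\beta)$, $\beta=f(\alpha)$ and $P:=\alpha\beta$, $S:=\alpha+\beta$, then both $\alpha$ and $\beta$ satisfy $u\cdot f(u)=P$, i.e.
$$a^{2}b^{4}u^{3}+(2a^{2}b^{2}-P)u^{2}+(a^{2}-2Pb^{2})u-Pb^{4}=0.$$
Let $\gamma$ be the third root. The product-of-roots identity $\alpha\beta\gamma=P/a^{2}$ forces $\gamma=1/a^{2}$, after which the remaining two symmetric-function identities determine
$$P=\Bigl(\tfrac{a^{2}+b^{2}}{a^{2}b^{2}+1}\Bigr)^{2},\qquad S=\tfrac{P-2a^{2}b^{2}-b^{4}}{a^{2}b^{4}}.$$
Thus $\alpha,\beta$ are the roots of $z^{2}-Sz+P=0$, and existence of a 2-cycle in $\bq_p$ is equivalent to the discriminant $D:=S^{2}-4P$ being a nonzero square in $\bq_p$.

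The main algebraic step is simplifying $D$. Setting $U=a^{2}$, $V=b^{2}$, $W=UV+1$, $c=(U+V)/W$ so that $P=c^{2}$, and using the relation $cW=U+V$, a direct expansion yields the clean identity
$$\bigl(c^{2}-2UV-V^{2}+2cUV^{2}\bigr)\,W^{2}=U^{2}(V^{2}-1)^{2},$$
while a parallel expansion gives $(c^{2}-2UV-V^{2}-2cUV^{2})W^{2}=U^{2}-4U^{3}V^{3}-6U^{2}V^{2}-3U^{2}V^{4}-4UV^{3}$. Multiplying these two factors and dividing by $U^{2}V^{4}W^{4}=a^{4}b^{8}W^{4}$ produces
$$D=\Bigl(\tfrac{a(b^{4}-1)}{b^{4}(a^{2}b^{2}+1)^{2}}\Bigr)^{2}\Psi(a,b),\qquad \Psi(a,b):=a^{2}-4a^{4}b^{6}-6a^{2}b^{4}-3a^{2}b^{8}-4b^{6}.$$
A direct evaluation yields $\Psi(1,1)=1-4-6-3-4=-16$, and since $a,b\in\ce_{p}$ gives $a\equiv b\equiv 1\pmod{p}$, we conclude $\Psi(a,b)\in\bz_{p}^{*}$ with $\Psi(a,b)\equiv -16\pmod{p}$.

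The hypothesis $p\equiv 1\pmod 4$ makes $-1$, and hence $-16=-4^{2}$, a quadratic residue mod $p$; applying Hensel's lemma (Lemma~\ref{hl}) to $F(x)=x^{2}-\Psi(a,b)$ at the lift $x_{0}\equiv 4\sqrt{-1}\pmod{p}$ yields $\sqrt{\Psi(a,b)}\in\bq_{p}$, so $D$ is a square. The assumption $0<|J_{1}|_{p}$ forces $b\neq 1$, and Lemma~\ref{epproperty} prevents $b$ from being a fourth root of unity inside $\ce_{p}$, so $b^{4}\neq 1$ and consequently $\sqrt{D}\neq 0$. Therefore $\alpha,\beta=(S\pm\sqrt{D})/2$ lie in $\bz_{p}^{*}$ and are distinct; via Theorem~\ref{pro1} the resulting function $\mathbf{u}$ defines a 2-periodic, non-translation-invariant $p$-adic quasi Gibbs measure. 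The main obstacle is the polynomial identity that cleanly isolates the $(b^{4}-1)^{2}$ factor from $D$: once that is in hand, the reduction $\Psi\equiv -16\pmod{p}$ is automatic, and the hypothesis $p\equiv 1\pmod 4$ enters precisely through the QR criterion for $-1$.
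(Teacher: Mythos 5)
Your proposal is correct, and at the level of strategy it mirrors the paper's own proof: the same level-alternating diagonal ansatz reduces the $2$-periodic version of \eqref{canonic_u} on $\G^2_+$ to producing a genuine 2-cycle of the Ising--Potts map $f_{a,b,2}$, and in both arguments existence of such a cycle comes down to a quadratic whose discriminant is a $p$-adic unit congruent to $-16$ modulo $p$, so that $p\equiv1\ (\mathrm{mod}\ 4)$ enters exactly through the quadratic-residue criterion for $-1$ together with Hensel's lemma. Where you differ is the algebraic route to that quadratic: the paper writes $u=x^2$, passes to $g_{a,b,2}(x)=\frac{a^2b^2x^2+1}{a^2x^2+b^2}$, and divides $g_{a,b,2}(g_{a,b,2}(x))-x$ by $g_{a,b,2}(x)-x$, arriving at $b^2(a^2b^2+1)x^2+a(b^4-1)x+b^2(a^2+b^2)=0$ with discriminant $\D(a,b)$; you stay in the $u$-variable and use Vieta on the cubic $u\,f_{a,b,2}(u)=P$ to pin down the sum $S$ and product $P$ of the would-be cycle. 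The two computations are literally equivalent: your $\Psi(a,b)$ is exactly the paper's $\D(a,b)$ (expand $a^2(b^4-1)^2-4b^4(a^2b^2+1)(a^2+b^2)$), and your quadratic $z^2-Sz+P$ has roots $x_{\pm}^2$, the squares of the paper's $x_{\pm}$, so both constructions yield the same measures. What your route buys is explicit formulas for $S,P$ without the substitution $u=x^2$, at the cost of the heavier polynomial identities; one small point to make explicit, since as written your Vieta step is only a necessary-condition computation, is the converse: with your $P$ and $S$ the cubic factors as $a^2b^4(u-\alpha)(u-\beta)(u-a^{-2})$, whence $f_{a,b,2}(\alpha)=P/\alpha=\beta$ and $f_{a,b,2}(\beta)=\alpha$, so the roots really do form a 2-cycle (and are not fixed points because $D\neq0$ gives $\alpha\neq\beta$). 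Your remaining checks ($b^4\neq1$ since $b=\exp_p(J_1)$ with $J_1\neq0$ and Lemma~\ref{epproperty}, the Hensel lift of $\sqrt{\Psi}$, and $\alpha,\beta\in\bz_p^{*}$ with $\alpha\equiv\beta\equiv-1\ (\mathrm{mod}\ p)$) are all sound.
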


\begin{proof}
In order to describe the 2-periodic $p$-adic quasi Gibbs measures,
we have to solve the following system of equations
\begin{equation}\label{per}
\left\{\begin{array}{ll}
u_1=a^2\left(\frac{b^2v_3+1}{v_3+b^2}\right)^2,& \ \ \ v_1=a^2\left(\frac{b^2u_3+1}{u_3+b^2}\right)^2,\\[3mm]
u_2=a^2\left(\frac{(b^2v_2+1)v_3}{(v_3+b^2)v_1}\right)^2,& \ \ \ v_2=a^2\left(\frac{(b^2u_2+1)u_3}{(u_3+b^2)u_1}\right)^2,\\[3mm]
u_3=a^2\left(\frac{(b^2v_3+1)v_1}{(v_2+b^2)v_3}\right)^2,& \ \ \ v_3=a^2\left(\frac{(b^2u_3+1)u_1}{(u_2+b^2)u_3}\right)^2.
\end{array}\right.
\end{equation}

For sake of the simplicity, we are looking for a solution of the form $\mathbf{u}=(u,u,u)$.
We then obtain from \eqref{per} that
$$f_{a,b,2}(f_{a,b,2}(u))=u.$$
It is clear that any solution $u$ of the equation
$f_{a,b,2}(f_{a,b,2}(u))=u$ should be a prefect square of some
$p$-adic number $x$, i.e., $u=x^2$. So, we have to solve the
following equation with respect to $x$
$$
g_{a,b,2}(g_{a,b,2}(x))=x.
$$

Note that any solution of the equation $g_{a,b,2}(u)=u$ is also
solution of the equation $g_{a,b,2}(g_{a,b,2}(u))=u$. However, we
are looking for only 2-period solutions. Hence, we have to solve
the equation $ \frac{g_{a,b,2}(g_{a,b,2}(u))-u}{g_{a,b,2}(u)-u}=0.
$ This is equivalent to the following equation
\begin{equation}\label{kvx}
b^2(a^2b^2+1)x^2+a(b^4-1)x+b^2(b^2+a^2)=0.
\end{equation}

The discriminant of the quadratic equation  \eqref{kvx} is
$$
\D(a,b)=a^2(b^4-1)^2-4b^4(a^2b^2+1)(b^2+a^2).
$$
Since $a\equiv1(\operatorname{mod }p)$ and $b\equiv1(\operatorname{mod }p)$, we get that
\begin{equation}\label{normdelte}
\D(1,1)\equiv-16(\operatorname{mod }p).
\end{equation}
Thus, $\sqrt{\D(a,b)}$ exists in $\bq_p$ if and only if $\sqrt{-1}$ exists in $\bq_p$ or
equivalently $p\equiv1(\operatorname{mod }4)$. In this case, the quadratic equation \eqref{kvx}
has two solutions
$$
x_{\pm}=\frac{-a(b^4-1)\pm\sqrt{\D(a,b)}}{2b^2(a^2b^2+1)}.
$$
Due to Theorem \ref{pro1}, there exists a 2-periodic $p$-adic
quasi Gibbs measures associated with
$\mathbf{u}_{\pm}=(x_{\pm}^2,x_{\pm}^2,x_{\pm}^2).$ This completes
the proof.
\end{proof}

\section{uniqueness of $p$-adic Gibbs Measure}

In the previous section we have shown that if $p\geq 3$, then  the
set $\cg(H)$ is not empty for the $p$-adic Ising-Vannimenus model
on a Cayley tree of order $k\geq1$. In this section we are going
to  study the uniqueness of $p$-adic Gibbs measure for this model.

\begin{thm}\label{uniqGm}
Let $p\geq3$ and $|J|_p\leq\frac{1}{p}$,
$0<|J_{1}|_{p}\leq\frac{1}{p}$. Assume that $\mu_{\mathbf{u}}$ be
a $p$-adic Gibbs measure associated with the function
$\mathbf{u}=\{\mathbf{u}_{xy}\}_{\langle x,y\rangle\in L}$. Then
$\mu_{\mathbf{u}}$ is a translation-invariant if one of the
following conditions holds:

\begin{itemize}
\item[$(i)$] $u_{xy,1}=u_{xy,2}$ for all $\langle x,y\rangle\in L;$
\item[$(ii)$] $u_{xy,2}=u_{xy,3}$ for all $\langle x,y\rangle\in L;$
\item[$(iii)$] $u_{xy,1}=u_{xy,3}$ for all $\langle x,y\rangle\in L$.
\end{itemize}
\end{thm}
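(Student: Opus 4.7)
The plan is to show, in each of the three cases, that in fact $u_{xy,1}=u_{xy,2}=u_{xy,3}$ for every edge $\langle x,y\rangle\in L$, and then a single $p$-adic contraction argument pins this common value to the unique fixed point $t^*\in\ce_p$ of $f_{a,b,k}$ from Theorem \ref{fixf}, making $\mathbf{u}$ translation-invariant. For the closing contraction step, once $v_{xy}:=u_{xy,1}=u_{xy,2}=u_{xy,3}$ is established everywhere, \eqref{canonic_u} collapses to $v_{xy}=a^2\prod_{z\in S(y)}\phi(v_{yz})$ with $\phi(t)=(b^2t+1)/(t+b^2)$. As in the proof of Theorem \ref{fixf}(i), one has $|\phi(u)-\phi(v)|_p=|b-1|_p|u-v|_p$ on $\ce_p$. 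Combined with Lemma \ref{pr}, $|v_{xy}-t^*|_p\leq |b-1|_p\sup_{z\in S(y)}|v_{yz}-t^*|_p$; setting $\Delta=\sup_{\langle x,y\rangle\in L}|v_{xy}-t^*|_p$ (finite, bounded by $1/p$ since $v_{xy}\in\ce_p$) yields $\Delta\leq |b-1|_p\Delta$, hence $\Delta=0$.

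Case (ii) is the quickest. Writing $A_{xy}:=u_{xy,1}$ and $B_{xy}:=u_{xy,2}=u_{xy,3}$, the second and third equations of \eqref{canonic_u} both give expressions for $B_{xy}$; equating them and cancelling common factors produces $\prod_{z\in S(y)}(A_{yz}/B_{yz})^2=1$. Since $-1\notin\ce_p$ for $p\geq 3$, we extract $\prod_{z\in S(y)}A_{yz}/B_{yz}=1$, and comparing the first equation with either form of $B_{xy}$ gives $A_{xy}/B_{xy}=\prod_{z\in S(y)}B_{yz}/A_{yz}=1$, so all three coordinates coincide.

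Cases (i) and (iii) require a genuine $p$-adic second-order estimate. Letting $A_{xy}$ denote the common value of the two equal coordinates and $B_{xy}$ the third, equating the two equations that determine $A_{xy}$ produces a multiplicative identity
\[
\prod_{z\in S(y)}(1+\epsilon_{yz})=1, \qquad |\epsilon_{yz}|_p=|A_{yz}-B_{yz}|_p,
\]
whence $|\sum_{z\in S(y)}\epsilon_{yz}|_p\leq\max_z|\epsilon_{yz}|_p^2$. Dividing the equations for $A_{xy}$ and $B_{xy}$ yields $A_{xy}/B_{xy}=\prod_{z\in S(y)}(1+\eta_{yz})$ with $|\eta_{yz}|_p=|A_{yz}-B_{yz}|_p$, and a direct computation (using $a,b,A_{yz},B_{yz}\equiv 1\pmod{p}$ and $p\geq3$) verifies $\eta_{yz}/\epsilon_{yz}\equiv\pm 1\pmod{p}$, so $\eta_{yz}=\pm(1+\theta_{yz})\epsilon_{yz}$ with $|\theta_{yz}|_p\leq p^{-1}$. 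Substituting into the constraint and using $|\eta_{yz}|_p\leq p^{-1}$ gives $|\sum_z\eta_{yz}|_p\leq p^{-1}\max_z|A_{yz}-B_{yz}|_p$; expanding the product then delivers
\[
|A_{xy}-B_{xy}|_p\leq p^{-1}\max_{z\in S(y)}|A_{yz}-B_{yz}|_p,
\]
and taking $\sup$ over edges forces $\sup_{\langle x,y\rangle\in L}|A_{xy}-B_{xy}|_p=0$, reducing to the closing step.

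The main obstacle is the quadratic improvement in cases (i) and (iii): a priori only the trivial non-contractive bound $|A_{xy}-B_{xy}|_p\leq\max_z|A_{yz}-B_{yz}|_p$ is available. The constraint $\prod_{z\in S(y)}(1+\epsilon_{yz})=1$ is what pushes the first-order sum $\sum_z\epsilon_{yz}$ down to second order, and one must verify that the leading proportionality constants relating $\eta_{yz}$ to $\epsilon_{yz}$ are congruent to $\pm 1$ modulo $p$ so that this quadratic gain survives the transfer from the constraint to the ratio $A_{xy}/B_{xy}-1$.
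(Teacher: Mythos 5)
Your argument is correct, but it takes a genuinely different route from the paper's in the two nontrivial cases. The paper never shows directly that the three coordinates coincide under conditions (i) and (iii): it reduces \eqref{canonic_u} to the coupled two-component systems \eqref{syst1}--\eqref{syst2} (the coordinates collapse only in case \eqref{syst3}, which is exactly your quick case (ii)), and then proves that the edge-indexed map built from the M\"obius-type functions $G_1,G_2$ is a contraction with factor $|b-1|_p\le p^{-1}$ on $\ce_p^2$, propagating the estimate down the tree via the product-difference bound \eqref{normF_i}; uniqueness of the solution of the reduced system together with the existence of the constant solution then yields translation invariance. You instead first force $u_{xy,1}=u_{xy,2}=u_{xy,3}$ edge by edge: in cases (i) and (iii) this rests on the multiplicative constraint $\prod_{z\in S(y)}(1+\epsilon_{yz})=1$ obtained by equating the two expressions for the repeated coordinate, which pushes $\sum_z\epsilon_{yz}$ to second order, combined with the check that $\eta_{yz}/\epsilon_{yz}\equiv-1\ (\operatorname{mod}\ p)$ (your ``$\pm1$'' is in fact $-1$ in both cases, and it is the uniformity of this congruence in $z$ that legitimizes the cancellation $\sum_z\eta_{yz}\approx-\sum_z\epsilon_{yz}$); this delivers the contractive bound $|A_{xy}-B_{xy}|_p\le p^{-1}\max_{z\in S(y)}|A_{yz}-B_{yz}|_p$ and hence coordinate equality, and your closing scalar contraction of $v_{xy}=a^2\prod_{z\in S(y)}\phi(v_{yz})$ toward the unique fixed point $t^*\in\ce_p$ of $f_{a,b,k}$ from Theorem \ref{fixf}(i), via Lemma \ref{pr} and $|b-1|_p\le p^{-1}$, is the same kind of non-Archimedean estimate the paper runs, only in one variable. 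Both arguments use the Gibbs hypothesis $\mathbf{u}_{xy}\in\ce^3_p$ (Theorem \ref{pro1}, Lemma \ref{epproperty}) in an essential way. What your route buys is the explicit structural fact that the coordinates must agree on every edge before any uniqueness statement is invoked, plus a reduction to the already-analyzed scalar fixed point; what the paper's route buys is that it sidesteps the second-order bookkeeping entirely by working with the two-dimensional contraction and quoting the existence of the translation-invariant solution.
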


\begin{proof} Due to Theorem \ref{pro1}, it is enough to show that the system of equations \eqref{canonic_u}
has a unique solution whenever either one of the conditions $(i)-(iii)$ is satisfied.  It follows from \eqref{canonic_u} that
\begin{equation}\label{syst1}
\left\{\begin{array}{ll}
u_{xy,1}=a^2\prod\limits_{z\in S(y)}\frac{b^2u_{yz,3}+1}{u_{yz,3}+b^2}\\
u_{xy,3}=a^2\prod\limits_{z\in S(y)}\frac{b^2u_{yz,1}+1}{u_{yz,1}+b^2}
\end{array}\right.
\ \ \ \ \mbox{if }\ \ \ u_{xy,1}=u_{xy,2}\ \ \ \forall\langle{x,y}\rangle\in L
\end{equation}
\begin{equation}\label{syst2}
\left\{\begin{array}{ll}
u_{xy,1}=a^2\prod\limits_{z\in S(y)}\frac{b^2u_{yz,1}+1}{u_{yz,1}+b^2}\\
u_{xy,2}=a^2\prod\limits_{z\in S(y)}\frac{b^2u_{yz,2}+1}{u_{yz,2}+b^2}
\end{array}\right.
\ \ \ \ \mbox{if }\ \ \ u_{xy,1}=u_{xy,3}\ \ \ \forall\langle{x,y}\rangle\in L
\end{equation}
\begin{equation}\label{syst3}
\left\{\begin{array}{ll}
u_{xy,1}=a^2\prod\limits_{z\in S(y)}\frac{b^2u_{yz,1}+1}{u_{yz,1}+b^2}\\
u_{xy,1}=u_{xy,2}=u_{xy,3}
\end{array}\right.
\ \ \ \ \mbox{if }\ \ \ u_{xy,2}=u_{xy,3}\ \ \ \forall\langle{x,y}\rangle\in L
\end{equation}

Let us consider the case $(i)$, i.e., $u_{xy,1}=u_{xy,2}$ for all $\langle x,y\rangle\in L$.
We can apply the same method for the rest cases.

The main idea is to show that the mapping $G:\ce_p^2\to\ce_p^2$, $G=(G_1,G_2)$ defined as
below is contraction with respect to the $p$-adic norm  $\| \textbf{u}\|_p=\max\{|u_1|_p,|u_2|_p\}$
in the set $\ce_p^2$
$$
G_1(\textbf{u})=\frac{b^2u_2+a^{-2}}{u_2+a^{-2}b^2},\quad G_2(\textbf{u})=\frac{b^2u_1+a^{-2}}{u_1+a^{-2}b^2},\quad \textbf{u}=(u_1,u_2)\in\ce_p^2.
$$

One has for any $\textbf{u},\textbf{v}\in\ce_p^2$ that
\begin{equation}\label{F_i}
\left|G_i(\textbf{u})-G_i(\textbf{v})\right|_p=\frac{\left|(b^4-1)(u_j-v_j)\right|_p}{\left|a^2(u_j+a^{-2}b^2)(v_j+a^{-2}b^2)\right|_p}
=|b-1|_p\cdot|u_j-v_j|_p\leq\frac{1}{p}\|\textbf{u}-\textbf{v}\|_p
\end{equation}
where $i,j\in\{1,2\}$ and $i\neq j$. Therefore, $
\| G(\textbf{u})-G(\textbf{v})\|_p\leq\| \textbf{u}-\textbf{v}\|_p.
$, i.e., $G:\ce_p^2\to\ce_p^2$ is a contraction mapping.

Let $\{\textbf{u}^{(r)}\}_{r\in\bn}$ and $\{\textbf{v}^{(r)}\}_{r\in\bn}$ be sequences in $\ce_p^2$.
For any $n\geq1$, we have that
$$
\prod\limits_{s=1}^nG_i(\textbf{u}^{(s)})-\prod\limits_{s=1}^nG_i(\textbf{v}^{(s)})=
\sum\limits_{s=1}^n\left(\prod\limits_{l>s}G_i(\textbf{u}^{(l)})\left(G_i(\textbf{u}^{(s)})-
G_i(\textbf{v}^{(s)})\right)\prod\limits_{m<s}G_i(\textbf{v}^{(m)})\right)
$$
By means of Lemma \ref{epproperty} and \eqref{F_i}, we can find that
\begin{equation}\label{normF_i}
\left|\prod\limits_{s=1}^nG_i(\textbf{u}^{(s)})-\prod\limits_{s=1}^nG_i(\textbf{v}^{(s)})\right|_p\leq\frac{1}{p}\max\limits_{1\leq s\leq n}\left\{\Vert \textbf{u}^{(s)}-\textbf{v}^{(s)}\Vert_p\right\}.
\end{equation}

We define the mapping $\cf:\ce_p^2\to\ce_p^2,$ $\cf=(\cf_1,\cf_2)$ as follows
$$
\cf_1(\textbf{u}_{xy})=\prod\limits_{z\in S(y)}G_1(\textbf{u}_{yz}),\quad \cf_2(\textbf{u}_{xy})=
\prod\limits_{z\in S(y)}G_2(\textbf{u}_{yz}), \quad \langle x,y\rangle\in L
$$

Then by means of \eqref{normF_i}, one has for all $\langle x,y\rangle\in L$ that
\begin{equation}\label{fixxx}
\left\|\cf(\textbf{u}_{xy})-\cf(\textbf{v}_{xy})\right\|_p\leq\frac{1}{p}\max\limits_{z\in S(y)}\Vert \textbf{u}_{yz}-\textbf{v}_{yz}\Vert_p.
\end{equation}
Let $\mathbf{u}=\{\mathbf{u}_{xy}\}$ and $\mathbf{v}=\{\mathbf{v}_{xy}\}$
be solutions of \eqref{syst1}. Then for any $n\geq1$ from \eqref{fixxx} one gets
$$
\left\|\cf(\textbf{u}_{xy})-\cf(\textbf{v}_{xy})\right\|_p\leq\frac{1}{p^n},\ \ \mbox{for all }\langle x,y\rangle\in L
$$
Since arbitrarily of $n$ we obtain $\mathbf{u}=\textbf{v}$. Thus, we have shown that \eqref{syst1} has no more one solution.
If $u_{xy,i}=u_i,\ i=1,3$ for all $\langle x,y\rangle\in L$ we get
$\cf\equiv F$. From compactness of $\ce_p^2$ the function $F$ (respectively function
$\cf$) has a unique fixed point on $\ce_p^2$. This means that $\mu_{\mathbf{u}}$ is a translation-invariant.
\end{proof}

From this theorem and Theorem \ref{unique(k,p)} we immediately
obtain the following

\begin{cor} \label{unique(k,p)}
Let $p\geq3$ and $|J|_p\leq\frac{1}{p}$,
$0<|J_{1}|_{p}\leq\frac{1}{p}$. Then there exists a unique translation-invariant
$p$-adic Gibbs
measure corresponding to the model \eqref{ham1} on the Cayley tree
of order $k\geq1$.
\end{cor}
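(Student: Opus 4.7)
The plan is to derive the corollary directly from Theorem \ref{unique(k,p)} and Theorem \ref{uniqGm}, linked through the bijective correspondence between solutions of \eqref{canonic_u} and $p$-adic quasi Gibbs measures recorded in Theorem \ref{pro1}. Existence of a translation-invariant $p$-adic Gibbs measure is already contained in Theorem \ref{existenceGM}(i), so the substance of the corollary is uniqueness.

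First I would fix an arbitrary translation-invariant $p$-adic Gibbs measure $\mu_{\mathbf{u}}$. Translation invariance forces $\mathbf{u}_{xy}\equiv\mathbf{u}=(u_1,u_2,u_3)$ to be constant in $\langle x,y\rangle$, and the $p$-adic Gibbs (as opposed to merely quasi Gibbs) hypothesis forces $\mathbf{u}\in\ce_p^3$. Substituting this constant $\mathbf{u}$ into \eqref{canonic_u} collapses the compatibility system into the three-variable system \eqref{tru}, so $\mathbf{u}\in\textbf{Fix}(F)\cap\ce_p^3$. Theorem \ref{unique(k,p)} then supplies the punch line: this intersection is a singleton, and its unique element has the form $(u^*,u^*,u^*)$, where $u^*$ is the unique element of $\textbf{Fix}(f_{a,b,k})\cap\ce_p$ produced by Theorem \ref{fixf}(i).

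At this point one may conclude in two equivalent ways. Directly, the bijection from Theorem \ref{pro1} between solutions of \eqref{canonic_u} and $p$-adic quasi Gibbs measures promotes uniqueness of the fixed point to uniqueness of $\mu_{\mathbf{u}}$. Alternatively, since the triple satisfies $u_{xy,1}=u_{xy,2}=u_{xy,3}$ along every edge, any of the three hypotheses of Theorem \ref{uniqGm} applies and yields the same conclusion, namely that $\mu_{\mathbf{u}}$ is the unique translation-invariant measure associated with the constant function $(u^*,u^*,u^*)$.

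I do not expect a genuine obstacle: all the heavy machinery has already been assembled in Theorems \ref{pro1}, \ref{fixf}, \ref{unique(k,p)} and \ref{uniqGm}, and the corollary is essentially a translation of those results into the language of Gibbs measures. The one subtle point worth flagging is the restriction of the fixed-point analysis to $\ce_p^3$ rather than to all of $\bq_p^3$; this is precisely the condition that distinguishes $p$-adic Gibbs measures from $p$-adic quasi Gibbs measures, and it explains why translation-invariant quasi Gibbs measures may fail to be unique (cf.\ Theorem \ref{existenceGM}(iii),(iv)) even when the genuine translation-invariant Gibbs measure is.
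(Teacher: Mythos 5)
Your argument is correct and follows essentially the same route as the paper, which states the corollary as an immediate consequence of Theorem \ref{unique(k,p)} (uniqueness of the fixed point of $F$ in $\ce_p\times\ce_p\times\ce_p$) combined with the correspondence of Theorem \ref{pro1} and the existence statement of Theorem \ref{existenceGM}. Your remark distinguishing the restriction to $\ce_p^3$ (Gibbs) from the full quasi Gibbs setting matches the paper's intent, so no gap to report.
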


\begin{rem} The proved Theorem \ref{uniqGm} and Corollary \ref{unique(k,p)} partially confirm the conjecture formulated in
\cite{MFDMAH}.
\end{rem}

\begin{rem}
Note that the uniqueness of $p$-adic Gibbs measures for the
general nearest neighbor interaction models (which are called
$\l$-models) on arbitrary Cayley tree has been proved in
\cite{KM,MA3,MFDM}.
\end{rem}

From the proofs of Theorem \ref{uniqGm} and \ref{unique(k,p)}
and analyzing the equation \eqref{canonic_u} we may formulate the
following conjecture.
\begin{conj}
Let $p\geq3$ and $|J|_p\leq\frac{1}{p}$,
$0<|J_{1}|_{p}\leq\frac{1}{p}$. Then there exists a unique $p$-adic
Gibbs measure corresponding to the model \eqref{ham1} on the Cayley
tree of order $k\geq1$.
\end{conj}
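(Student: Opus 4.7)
The plan is to reduce uniqueness to showing that the system \eqref{canonic_u} admits at most one solution in $(\ce_p^3)^L$; combined with the existence of the translation-invariant solution provided by Theorem \ref{unique(k,p)} and the correspondence in Theorem \ref{pro1}, this yields the conjecture. First I would suppose $\mathbf{u} = \{\mathbf{u}_{xy}\}$ and $\mathbf{v} = \{\mathbf{v}_{xy}\}$ are two solutions in $(\ce_p^3)^L$ and define the level discrepancies
\[
M_n = \sup\{|u_{xy,i} - v_{xy,i}|_p : x \in W_{n-1},\ y \in S(x),\ i \in \{1,2,3\}\}.
\]
By Lemma \ref{epproperty}(b) we have $M_n \leq 1/p$ for every $n$. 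The target is a uniform edge-contraction $M_n \leq \tfrac{1}{p} M_{n+1}$; iterating yields $M_n \leq p^{-k-1}$ for every $k \geq 1$, forcing $M_n = 0$ and hence $\mathbf{u} = \mathbf{v}$.

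The main computational input is that two natural combinations of the three coordinates admit clean contracting recursions. From the first equation of \eqref{canonic_u} together with the identity $\psi_1(u) - \psi_1(v) = \frac{(b^4 - 1)(u - v)}{(u + b^2)(v + b^2)}$, where $\psi_1(w) = \frac{b^2 w + 1}{w + b^2}$, and Lemma \ref{pr}, one obtains
\[
|u_{xy,1} - v_{xy,1}|_p \leq |b - 1|_p \max_{z \in S(y)} |u_{yz,3} - v_{yz,3}|_p \leq \tfrac{1}{p} M_{n+1}.
\]
A direct algebraic computation then yields the structural identity $\psi_2(\mathbf{w})\psi_3(\mathbf{w}) = \psi_1(w_2)\psi_1(w_3)$, where $\psi_2, \psi_3$ are the per-vertex factors appearing in the second and third equations of \eqref{canonic_u}. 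Dividing the product of those two equations by the first gives
\[
\frac{u_{xy,2}\, u_{xy,3}}{u_{xy,1}} = a^2 \prod_{z \in S(y)} \psi_1(u_{yz,2}),
\]
and the same contraction estimate shows that this quantity, and consequently $u_{xy,2} u_{xy,3}$ itself, differs between $\mathbf{u}$ and $\mathbf{v}$ by at most $\tfrac{1}{p} M_{n+1}$.

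The hard part will be passing from these contracting bounds on $u_{xy,1}$ and the product $u_{xy,2} u_{xy,3}$ to individual bounds on $u_{xy,2}$ and $u_{xy,3}$. A direct sup-norm Banach argument on the per-vertex map $\mathbf{\Psi} = (\Psi_1, \Psi_2, \Psi_3)$ fails, because several entries of the Jacobian of $\Psi_2$ and $\Psi_3$ at the translation-invariant fixed point have $p$-adic norm equal to $1$ and therefore do not contract. My plan to overcome this is to apply a local Hensel-type inversion at each edge, in the spirit of Theorem \ref{unique(k,p)}: once $u_{xy,1}$ and $u_{xy,2} u_{xy,3}$ agree for both solutions up to the contraction factor, the residual polynomial system for the pair $(u_{xy,2}, u_{xy,3})$ coming from the third equation of \eqref{canonic_u} should have Jacobian determinant that is a nonzero unit modulo $p$ at $(1,1)$, analogous to the determinant $2^{3k}$ already computed in the proof of Theorem \ref{unique(k,p)}. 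Propagating this local Hensel uniqueness uniformly along the infinite tree, while simultaneously running the contraction argument on the level discrepancies $M_n$, is the delicate technical step required to complete the proof of the conjecture.
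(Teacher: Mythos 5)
You should first note that the statement you are proving is stated in the paper only as a \emph{conjecture}: the paper itself has no proof of it. What the paper actually proves is strictly weaker, namely uniqueness of the \emph{translation-invariant} $p$-adic Gibbs measure (Theorem \ref{unique(k,p)}, via Hensel's lemma applied to the three-variable system \eqref{tru}) and translation invariance of a $p$-adic Gibbs measure under the extra symmetry hypotheses $(i)$--$(iii)$ of Theorem \ref{uniqGm} (via a contraction argument on a reduced two-dimensional map). Your proposal aims at the full conjecture, and its first two reductions are correct and genuinely useful: the identity $\psi_1(u)-\psi_1(v)=\frac{(b^4-1)(u-v)}{(u+b^2)(v+b^2)}$ with $|b^4-1|_p=|b-1|_p\leq\frac{1}{p}$ does give the contraction for $u_{xy,1}$, and the identity $\psi_2(\mathbf{w})\psi_3(\mathbf{w})=\psi_1(w_2)\psi_1(w_3)$ does give the contracting recursion for $u_{xy,2}u_{xy,3}/u_{xy,1}$, hence for the product $u_{xy,2}u_{xy,3}$.

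However, what you yourself call ``the delicate technical step'' is exactly where the proof is missing, and the mechanism you propose for it does not work as stated. A ``local Hensel-type inversion at each edge'' has nothing to invert: unlike the translation-invariant situation of Theorem \ref{unique(k,p)}, where periodicity collapses \eqref{canonic_u} to a closed polynomial system $G(\mathbf{u})=\Theta$ in three unknowns, at a fixed edge $\langle x,y\rangle$ the third equation of \eqref{canonic_u} expresses $u_{xy,3}$ through the unknowns $u_{yz,i}$ on the \emph{next} generation of edges, not through $u_{xy,1}$ and $u_{xy,2}u_{xy,3}$ on the same edge; so there is no finite polynomial system for the pair $(u_{xy,2},u_{xy,3})$ to which Lemma \ref{hl} could be applied, and no route is given from such a local statement to the quantitative bound $M_n\leq\frac{1}{p}M_{n+1}$ that your induction needs. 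Moreover, as you observe, the relevant partial derivatives of $F_3$ are $p$-adic units, so no direct sup-norm contraction is available. A more promising way to close the gap is to change variables to $(u_1,\,P,\,u_3)$ with $P=u_2u_3$: the per-vertex factor of the third equation becomes $\frac{(b^2w_3+1)w_1}{P+b^2w_3}$, whose dependence on $w_3$ contracts with factor $|P-1|_p\leq\frac{1}{p}$ (since $P\in\ce_p$), while its unit-size dependences on $w_1$ and $P$ involve quantities whose discrepancies you have already shown to contract one level deeper; this yields an estimate of the type $M_n\leq\frac{1}{p}\max\{M_{n+1},M_{n+2}\}$ and hence $M_n=0$. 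As written, though, your argument stops short of any such step, so it does not prove the conjecture; it only reproduces correct preliminary estimates and a plan.
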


\section{The phase transitions}

In this section, we establish the existence of the phase
transition for the $p$-adic Ising-Vannimenus model \eqref{ham1} on
a Cayley tree of order $k$. We need the following auxiliary
result.

\begin{prop}\label{recZ}
Let $\textbf{u}=(u_1,u_2,u_3)$ be any solution of the system of equations \eqref{tru}
and $Z_{n}^{(\textbf{u})}$ be a partition function \eqref{partition} associated with
$\textbf{u}$. Then one has that
$$
Z_{n}^{(u)}=\left(\frac{a(b^2u_3+1)}{bu_3}\right)^{k|V_{n-1}|}.
$$
\end{prop}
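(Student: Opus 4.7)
The plan is to establish a multiplicative recursion $Z_n^{(\mathbf{u})} = A^{|W_{n-1}|}\,Z_{n-1}^{(\mathbf{u})}$ with the constant $A=\bigl(\tfrac{a(b^2u_3+1)}{bu_3}\bigr)^k$, and then iterate. To derive the recursion, I split $\sigma\in\Omega_{V_n}$ as $\sigma=\sigma_{n-1}\vee\omega$ with $\omega=\sigma|_{W_n}$ and sum over $\omega$ first. Since $H_n(\sigma_{n-1}\vee\omega)=H_{n-1}(\sigma_{n-1})+\sum_{y\in W_n}\bigl(J\,\sigma(\mathrm{par}(y))+J_1\,\sigma(\mathrm{grand}(y))\bigr)\omega(y)$ and the product over $W_n$ factorizes, the inner sum decouples into single-vertex transfers
$$K(s,t):=\sum_{\omega=\pm1}a^{s\omega}b^{t\omega}h_{s\omega}^{s\omega},\qquad s=\sigma(\mathrm{par}(y)),\ t=\sigma(\mathrm{grand}(y)).$$

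Because the $k$ children of each $x\in W_{n-1}$ share both parent $x$ and grandparent $\mathrm{par}(x)$, these transfers regroup as $\prod_{x\in W_{n-1}}K(\sigma(x),\sigma(\mathrm{par}(x)))^k$. To recover $Z_{n-1}^{(\mathbf{u})}$ from the outer summation, I need the ratios $K(s,t)^k/h_{ts}^{ts}$ to take a single common value $A$ across all four pairs $(s,t)\in\{+1,-1\}^2$; once this is shown, the product becomes $A^{|W_{n-1}|}\prod_{y'\in W_{n-1}}(h_{\sigma(\mathrm{par}(y'))\sigma(y')})^{\sigma(\mathrm{par}(y'))\sigma(y')}$, which after the remaining sum over $\sigma_{n-1}$ is precisely $A^{|W_{n-1}|}Z_{n-1}^{(\mathbf{u})}$.

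The hard part is verifying this four-case identity. Adopting the canonical parametrization of $\mathbf{h}$ from Theorem \ref{pro1} with $h_{xy,++}=1$ (so that $h_{+-}=u_3/a^2$, $h_{-+}=u_1/a^2$, $h_{--}=u_2/u_1$), a direct computation gives
$$K(+,+)=\tfrac{a(b^2u_3+1)}{bu_3},\quad K(+,-)=\tfrac{a(u_3+b^2)}{bu_3},\quad K(-,+)=\tfrac{a(u_2+b^2)}{bu_1},\quad K(-,-)=\tfrac{a(b^2u_2+1)}{bu_1}.$$
The case $(+,+)$ gives $A$ immediately since $h_{++}=1$. For the three remaining cases, the required equalities $K(+,-)^k\cdot h_{-+}=A$, $K(-,+)^k\cdot h_{+-}=A$, and $K(-,-)^k/h_{--}=A$ reduce, after clearing denominators, to the three equations of the translation-invariant system \eqref{tru}; notably, the $(-,-)$-case actually invokes two of the three equations simultaneously, so all three compatibility relations are essential. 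This is exactly where the hypothesis that $\mathbf{u}$ solves \eqref{tru} is fully exploited.

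Finally, iterating $Z_n^{(\mathbf{u})}=A^{|W_{n-1}|}Z_{n-1}^{(\mathbf{u})}$ down to the trivial base case $Z_0^{(\mathbf{u})}=1$ and summing $|W_0|+|W_1|+\cdots+|W_{n-1}|=|V_{n-1}|$ produces $Z_n^{(\mathbf{u})}=A^{|V_{n-1}|}=\bigl(\tfrac{a(b^2u_3+1)}{bu_3}\bigr)^{k|V_{n-1}|}$, as claimed.
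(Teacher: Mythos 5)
Your argument is essentially the paper's own: the shell-by-shell recursion $Z_n^{(\mathbf{u})}=A^{|W_{n-1}|}Z_{n-1}^{(\mathbf{u})}$ with $A=\bigl(\tfrac{a(b^2u_3+1)}{bu_3}\bigr)^k$ is exactly the content of the identity \eqref{D(x,y)} together with \eqref{canonic21}, and your explicit check that $K(s,t)^k/h_{ts}^{ts}$ takes the same value for all four spin pairs (using the three equations of \eqref{tru}) is just a more detailed verification of what the paper asserts when it evaluates $D(x,y)$ only at $\sigma(x)=\sigma(y)=1$. The only (shared) looseness is the innermost shell: your base step $Z_1=A^{|W_0|}Z_0$ is not of the same form because first-generation vertices have no grandparent (no $J_1$ factor there), a point the paper's proof glosses over in the same way when it writes the product over all of $L_{n-1}$.
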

\begin{proof} We are going to provide some recurrence formula for the partition function
$Z_{n}^{(u)}$. For the given configuration $\s\in\Om_{V_n}$ and $n\geq1$, we denote
\begin{equation}\label{U_xy}
U^{(n)}_{xy,\s}=\left\{\begin{array}{ll}
1,&\ \mbox{if }\ \langle{x,y}\rangle\in\cn_{1,n}(\s)\\[3mm]
\frac{a^2}{u_{xy,3}},&\ \mbox{if }\ \langle{x,y}\rangle\in\cn_{2,n}(\s)\\[3mm]
\frac{a^2}{u_{xy,1}},&\ \mbox{if }\ \langle{x,y}\rangle\in\cn_{3,n}(\s)\\[3mm]
\frac{u_{xy,2}}{u_{xy,1}},&\ \mbox{if }\ \langle{x,y}\rangle\in\cn_{4,n}(\s)
\end{array}\right.
\end{equation}
There exists some function $D(x,y)\in\bq_p$ such that
\begin{equation}\label{D(x,y)}
\prod\limits_{z\in S(y)}\sum_{\w(z)\in\{\pm1\}}\exp_p\left[\w(z)(J\s(y)+J_1\s(x))\right]
U^{(n)}_{yz,\s\vee\w}=U^{(n-1)}_{xy,\s}D(x,y)
\end{equation}
where $x\in W_{n-2},\ y\in S(x)$ and $\s\in\Om_{V_{n-1}}$. It follows from the last equality that
$$
\prod\limits_{x\in W_{n-2}\atop{y\in S(x)}}D(x,y)U^{(n-1)}_{xy,\s}=
\prod\limits_{x\in W_{n-2}\atop{y\in S(x)}}
\prod\limits_{z\in S(y)}\sum_{\w(z)\in\{\pm1\}}\exp_p\left[\w(z)(J\s(y)+J_1\s(x))\right]U^{(n)}_{yz,\s\vee\w}
$$

Let $U_{n-1}=\prod\limits_{x\in W_{n-2}\atop{y\in S(x)}}D(x,y)$. By multiplying $\exp_p(H_{n-1}(\s))$
both sides of last equality, we obtain that
\begin{equation*}
\begin{split}
U_{n-1}&\exp_p(H_{n-1}(\s))\prod\limits_{x\in W_{n-2}\atop{y\in S(x)}}U^{(n-1)}_{xy,\s}=\\
&=\exp_p(H_{n-1}(\s))\prod\limits_{x\in W_{n-2}\atop{y\in S(x)}}
\prod\limits_{z\in S(y)}\sum_{\w(z)\in\{\pm1\}}\exp_p\left[\w(z)(J\s(y)+J_1\s(x))\right]U^{(n)}_{yz,\s\vee\w}
\end{split}
\end{equation*}

It follows from \eqref{U_xy} and \eqref{mu_u} that
\begin{equation}\label{canonicM}
U_{n-1}Z^{(u)}_{n-1}\mu_{u}^{(n-1)}(\s)=
Z^{(u)}_{n}\sum\limits_\w\mu_{u}^{(n)}(\s\vee\w).
\end{equation}

Since $\mu_{u}^{(n)}$ is a probability measure, i.e. for each
$n\in\bn$, one has
\[
\underset{\sigma\in
\Om_{V_{n-1}}}{\sum}\mu_{u}^{(n-1)}(\sigma)=\underset{\sigma\in
\Om_{V_{n-1}}}{\sum}\underset{\w\in\Om_{W_{n}}}{\sum}\mu_u^{(n)}(\sigma\vee\w)=1,
\]
we then find from \eqref{canonicM} that
\begin{equation}\label{canonic21}
Z^{(u)}_{n}=U_{n-1}Z^{(u)}_{n-1}.
\end{equation}

Now assume that $\textbf{u}_{xy}=(u_1,u_2,u_3)$ for any
$\langle{x,y}\rangle\in L$. It follows from \eqref{D(x,y)} with
$\s(x)=\s(y)=1$ that
$$
D(x,y)=\prod\limits_{z\in S(y)}(ab+(ab)^{-1}a^2u_3^{-1})=\left(\frac{a(b^2u_3+1)}{bu_3}\right)^k.
$$
Consequently, we get form \eqref{canonic21} that
$$
Z_n^{(u)}=\prod\limits_{\langle{x,y}\rangle\in L_{n-1}}\left(\frac{a(b^2u_3+1)}{bu_3}\right)^k
=\left(\frac{a(b^2u_3+1)}{bu_3}\right)^{k|V_{n-1}|}.
$$
\end{proof}

\begin{thm}\label{phtr}
Let $p\geq3$, $|J|_{p}\leq\frac{1}{p}$,
$0<\left|J_{1}\right|_{p}\leq\frac{1}{p}$. Assume that one of the
following conditions is satisfied:
\begin{enumerate}
\item[(i)] $(k,p)=1$ and $\frac{p-1}{(k,p-1)}$ is even;
\\
\item[(ii)]
 $k$ is odd and $\max\{|k|_p, |a-1|_p\}<|b-1|_p$;
\end{enumerate}
 then there
exists a phase transition for the $p$-adic Ising-Vannimenus model
\eqref{ham1}.
\end{thm}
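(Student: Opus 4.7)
The plan is to verify the definition of phase transition by exhibiting two distinct translation-invariant $p$-adic quasi Gibbs measures, one of them bounded and the other unbounded. The existence of both is already in hand: Theorem \ref{existenceGM}(i) supplies a translation-invariant $p$-adic Gibbs measure $\mu_{+}$ corresponding to a fixed point $\mathbf{u}_{+}\in\ce_p^3$ of the system \eqref{tru}, while under either hypothesis (i) or (ii), part (iii) or (iv) of Theorem \ref{existenceGM} provides a further translation-invariant quasi Gibbs measure $\mu_{-}$ whose corresponding fixed point $\mathbf{u}_{-}$ lies in $(-\ce_p)^3$. These measures are distinct by Theorem \ref{pro1}, since they come from distinct fixed points of \eqref{canonic_u}.

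To separate them as bounded versus unbounded I would use the explicit partition-function formula of Proposition \ref{recZ}. For $\mathbf{u}_{+}\in\ce_p^3$, Lemma \ref{epproperty} yields $b^2 u_{+,3}\in\ce_p$, and since $p\geq 3$ one has $|b^2 u_{+,3}+1|_p=1=|b u_{+,3}|_p=|a|_p$; therefore $|Z_n^{(u_+)}|_p=1$ for every $n$. Inspecting the second expression for $\mu^{(n)}_h(\sigma)$ in \eqref{mu_u}, each factor in the numerator has $p$-adic norm at most $1$ (since all coordinates of $\mathbf{u}_{+}$ lie in $\ce_p$ and $|\exp_p(H_n)|_p=1$), so $|\mu_{+}^{(n)}(\sigma)|_p\leq 1$ for every cylinder and $\mu_{+}$ is bounded.

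For $\mathbf{u}_{-}\in(-\ce_p)^3$ one has $u_{-,3}\equiv -1\pmod p$ and $b\equiv 1\pmod p$, so $b^2 u_{-,3}+1\equiv 0\pmod p$, i.e.\ $|b^2 u_{-,3}+1|_p\leq p^{-1}$. Proposition \ref{recZ} then gives
\[
|Z_n^{(u_-)}|_p=\left|\frac{a(b^2 u_{-,3}+1)}{b u_{-,3}}\right|_p^{k|V_{n-1}|}\leq p^{-k|V_{n-1}|}\longrightarrow 0.
\]
To witness unboundedness I would test $\mu_{-}$ on the cylinder $A_n=\{\sigma\in\Om:\sigma(x)=+1\ \text{for every}\ x\in V_n\}$. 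For the constant $+1$-configuration the sets $\cn_{2,n}(\sigma),\cn_{3,n}(\sigma),\cn_{4,n}(\sigma)$ from the proof of Theorem \ref{pro1} are empty, so the numerator in \eqref{mu_u} collapses to $\exp_p(H_n(+1))$, which has $p$-adic norm $1$. Consequently $|\mu_{-}(A_n)|_p=|Z_n^{(u_-)}|_p^{-1}\to\infty$, proving unboundedness and hence the phase transition.

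The only real subtlety is choosing a cylinder on which the numerator in \eqref{mu_u} has $p$-adic norm exactly $1$; the constant $+1$-configuration achieves this precisely because the three correction products become empty, so the norm is determined entirely by $Z_n^{(u_-)}$. Everything else reduces to Proposition \ref{recZ} combined with elementary computations in $\ce_p$ via Lemma \ref{epproperty}.
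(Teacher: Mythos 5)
Your proposal is correct and follows essentially the same route as the paper: both invoke Theorem \ref{existenceGM} to get a fixed point in $\ce_p^3$ and one in $(-\ce_p)^3$, compute $|Z_n|_p$ via Proposition \ref{recZ} and Lemma \ref{epproperty}, and conclude that the first measure is bounded while the second is unbounded. The only cosmetic difference is that the paper notes $|\mu^{(n)}_{\mathbf{u}}(\sigma)|_p=1/|Z_n^{(\mathbf{u})}|_p$ for \emph{every} configuration (since all coordinates of $\mathbf{u}$ have unit norm), whereas you evaluate only on the all-plus cylinder, which suffices equally well.
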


\begin{proof} (i) Due to Theorem \ref{existenceGM}, we infer that there is a unique
$p$-adic Gibbs measure $\mu_0$ and  $d-$number of $p$-adic quasi Gibbs measures
$\mu_r$, $r=\overline{1,d}$ where $d=(k,p-1)$. Hence, in order to prove the
existence of the phase transition, we have to check
for boundedness/unboundedness of these measures.

As we already knew, the boundary functions corresponding to the
measures $\mu_r$, $r=\overline{0,d}$ are
$\textbf{u}_{(r)}=(u^{(r)}_1,u^{(r)}_2,u^{(r)}_3),\
r=\overline{0,d}$ in which $\textbf{u}_{(0)}\in\ce^3_p$ and
$\textbf{u}_{(r)}\in(-\ce_p)^3$, $ r=\overline{1,d}$. Due to
Proposition \ref{recZ}, we have
$$
Z_{n}^{(\textbf{u}_{(r)})}=\left(\frac{a(b^2u^{(r)}_3+1)}{bu^{(r)}_3}\right)^{k|V_{n-1}|},\quad
r=\overline{0,d}, \  n\in\bn.
$$
By using Lemma \ref{epproperty}, we can easily compute the $p$-adic norm
of $Z_{n}^{(\textbf{u}_{(r)})}$ as
\begin{equation}\label{normZ}
\left|Z_{n}^{(\textbf{u}_{(r)})}\right|_p=\left\{\begin{array}{ll}
1, & {if }\ \ r=0;\\
\leq p^{-k|V_{n-1}|}, & {if }\ \ r=\overline{1,d}.
\end{array}\right.
\end{equation}
It follows from \eqref{mu_u} and $|{u}^{(r)}_3|_p=1,\ r=\overline{0,d}$ that
$$
\left|\mu_{\textbf{u}_{(r)}}^{(n)}(\s)\right|_p=\frac{1}{\left|Z_{n}^{(\textbf{u}_{(r)})}\right|_p},\ \ \ \forall \  \s\in\Om_n.
$$
Therefore, from \eqref{normZ} one finds
$$
\left|\mu_{\textbf{u}_{(r)}}^{(n)}(\s)\right|_p=\left\{\begin{array}{ll}
1, & {if }\ \ r=0;\\
\geq p^{k|V_{n-1}|}, & {if }\ \ r=\overline{1,d}.
\end{array}\right.
$$
This means that the measure $\mu_0$ is bounded and the measures $\mu_{r}$
for all $r=\overline{1,d}$ are not bounded. Therefore, there exists a phase
transition for the $p$-adic Ising-Vannimenus model.

(ii) This case can be proceeded by the same argument as above.
This completes the proof.
\end{proof}

\begin{rem}
Theorems \ref{existenceGM}-\ref{perGM} and \ref{phtr} extend all
results of \cite{MFDMAH} to arbitrary order Cayley trees. However,
the question for an existence of the strong phase transition for
the $p$-adic Ising-Vannimenus model still remains to be open. We
can conditionally prove it by assuming the following conjecture.
\end{rem}

\begin{conj}\label{solutionz>1}
Let $p\geq3$ and $a,b\in\ce_p,\ b\neq1$. Let $\Delta$ be a set of
all solutions of the system  \eqref{tru}. If $(k,p)=1$ and
$\frac{p-1}{(k,p-1)}$ is even then
$\Delta\cap((\bq_p\setminus\bz_p)\times(-\ce_p)\times(-\ce_p))\neq\emptyset.$
\end{conj}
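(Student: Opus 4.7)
The plan is to look for solutions of \eqref{tru} with $|u_1|_p>1$ and $u_2,u_3\in-\ce_p$ via a perturbative ansatz around the singular locus of the first equation. Writing $b^2u_3+1=b^2(u_3+b^2)-(b^4-1)$ and using $|b^4-1|_p=|b-1|_p$ (valid for $p\geq 3$), one sees that $|u_1|_p=\bigl(|b^2u_3+1|_p/|u_3+b^2|_p\bigr)^k$ exceeds $1$ if and only if $|u_3+b^2|_p<|b-1|_p$. Accordingly I would set $u_3=-b^2+\epsilon$ with $|\epsilon|_p<|b-1|_p$, so that the first equation of \eqref{tru} produces $|u_1|_p=(|b-1|_p/|\epsilon|_p)^k>1$ explicitly.

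Next I would eliminate $u_1$ between the first and third equations of \eqref{tru} to obtain
\[
(u_2+b^2)^k=\frac{u_1^{k+1}(u_3+b^2)^k}{u_3^{k+1}},
\]
and between the first and second equations to obtain an analogous identity involving $b^2u_2+1$. Since $\frac{p-1}{(k,p-1)}$ is even, $\sqrt[k]{-1}\in\bq_p$, so $k$-th roots can be extracted and $u_2$ expressed as an explicit algebraic function of $\epsilon$, up to a choice among $(k,p-1)$ branches of the $k$-th root. Substituting into the second equation of \eqref{tru} collapses the whole system to a single scalar equation $\Phi(\epsilon)=0$, and I would attempt to apply Hensel's Lemma~\ref{hl} to $\Phi$: locate an approximate root $\epsilon_0$ modulo a suitable power of $p$, check that $\Phi'(\epsilon_0)$ is a $p$-adic unit, and lift.

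The main obstacle, which is presumably the reason the statement remains at the level of a conjecture, is the valuation bookkeeping. A direct computation from the two identities above yields simultaneously $|u_2+b^2|_p=|b-1|_p^{k+1}/|\epsilon|_p^k$ and $|b^2u_2+1|_p=|b-1|_p^k/|\epsilon|_p^{k-1}$. Applying the strong triangle inequality to the syzygy $b^2(u_2+b^2)-(b^2u_2+1)=b^4-1$ forces in every generic sub-regime the equality $|\epsilon|_p=|b-1|_p$, contradicting the starting assumption $|\epsilon|_p<|b-1|_p$. Escaping this deadlock would require working on the cancellation boundary where extra non-leading cancellations match $b^2(u_2+b^2)$ with $b^4-1$ beyond the first order; making this matching precise requires controlling the residue-field data of the leading digits of $\epsilon$ and of $b-1$ in $\bbf_p$, and only then can the Hensel lifting be carried out. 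A viable alternative would be to replace the perturbative ansatz by a direct fixed-point argument on a carefully chosen sub-ball of $(\bq_p\setminus\bz_p)\times(-\ce_p)\times(-\ce_p)$, but the non-contractiveness of the induced map in the target region makes this route equally delicate, and this is precisely where the argument must be completed to turn the conjecture into a theorem.
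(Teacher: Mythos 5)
The statement you set out to prove is left as an open conjecture in the paper (it is only invoked conditionally, as the hypothesis of Theorem \ref{strpht}), so there is no proof in the paper to compare with; and your text, by its own admission, is not a proof either: no approximate root of your scalar equation $\Phi$ is exhibited, no unit derivative is verified, and no Hensel lifting (Lemma \ref{hl}) is actually carried out, nor is any fixed-point argument completed.

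The more serious point is that the ``deadlock'' you uncovered is not a technical obstacle that can be escaped on a cancellation boundary; pushed one step further, your own bookkeeping shows the set in question is empty. By Proposition \ref{solutionnorm}, \emph{every} solution of \eqref{tru} satisfies $|u_2|_p=|u_3|_p=1$. If in addition $|u_1|_p>1$, then the first equation forces $|b^2u_3+1|_p=|b-1|_p$ and $t:=|u_3+b^2|_p<|b-1|_p$, with $|u_1|_p=\left(|b-1|_p/t\right)^k$. Taking norms in the third and second equations of \eqref{tru} then pins down \emph{exactly} (not merely in a generic sub-regime) $|u_2+b^2|_p=|b-1|_p\left(|b-1|_p/t\right)^k$ and $|b^2u_2+1|_p=t\left(|b-1|_p/t\right)^k$; these two norms are strictly different because $t<|b-1|_p$, so the strong triangle inequality applied to the identity $b^2(u_2+b^2)-(b^2u_2+1)=b^4-1$ yields $|b-1|_p\left(|b-1|_p/t\right)^k=|b^4-1|_p=|b-1|_p$, i.e. $t=|b-1|_p$, a contradiction. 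Only valuations enter this chain, never the residue-field digits of $\epsilon$ or of $b-1$, so no choice of leading digits, no higher-order matching, and no sub-ball fixed-point scheme in $(\bq_p\setminus\bz_p)\times(-\ce_p)\times(-\ce_p)$ can rescue the ansatz. Consequently, for $p\geq3$, $a,b\in\ce_p$, $b\neq1$, there is no solution of \eqref{tru} with $u_1\in\bq_p\setminus\bz_p$ at all: the correct conclusion of your computation is a refutation of the conjecture (which also empties the hypothesis of Theorem \ref{strpht}), unless an error is first found in Proposition \ref{solutionnorm}. You should therefore either recast your analysis as a disproof or locate a gap in that proposition, rather than attempt to complete the proposed lifting.
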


\begin{thm}\label{strpht}
Let $|J|_{p}\leq\frac{1}{p}$,
$0<\left|J_{1}\right|_{p}\leq\frac{1}{p}$. If Conjecture \ref{solutionz>1} holds
true then there exists a strong phase transition
for the $p$-adic Ising-Vannimenus model.
\end{thm}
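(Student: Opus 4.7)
The plan is to extract from Conjecture \ref{solutionz>1} a translation-invariant $p$-adic quasi Gibbs measure whose partition function has $p$-adic norm tending to zero, and to pair it with the bounded translation-invariant Gibbs measure from Theorem \ref{existenceGM}(i); the separation will be driven by the strict inequality $|u_1^*|_p>1$ supplied by the conjecture.

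First I would invoke the conjecture to fix $\mathbf{u}^*=(u_1^*,u_2^*,u_3^*)$ satisfying \eqref{tru} with $|u_1^*|_p>1$ and $u_2^*,u_3^*\in-\ce_p$. By Theorem \ref{pro1} this determines a translation-invariant $p$-adic quasi Gibbs measure $\nu$. Simultaneously I would take the unique bounded translation-invariant $p$-adic Gibbs measure $\mu$ corresponding to the solution $\mathbf{u}^{(0)}\in\ce_p^3$ from Theorem \ref{existenceGM}(i). By Proposition \ref{recZ}, one has $|Z_n^{\mu}|_p=1$ (since $u_3^{(0)}\in\ce_p$ forces $|b^2u_3^{(0)}+1|_p=1$ for $p\geq3$), whereas $|Z_n^{\nu}|_p=C^{k|V_{n-1}|}$ with $C=|b^2u_3^*+1|_p<1$ (since $u_3^*\in-\ce_p$ yields $|u_3^*+1|_p<1$, and $|b^2-1|_p<1$).

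Next I would record the $p$-adic norms of the single-configuration probabilities via formula \eqref{mu_u}. Setting $M_{\sigma}:=|\{x\in W_{n-1}:\sigma(x)=-1\}|$, the only factors whose $p$-adic norm differs from $1$ in the $\nu$-numerator are those of types $\mathcal{N}_{3,n}$ and $\mathcal{N}_{4,n}$, each contributing $|u_1^*|_p^{-1}$; this yields
\[
|\nu^{(n)}(\sigma)|_p \;=\; \frac{|u_1^*|_p^{-kM_{\sigma}}}{|Z_n^{\nu}|_p},
\]
which is strictly maximized on configurations with $M_{\sigma}=0$. For $\mu$, by contrast, all factors lie in $\ce_p$, so $|\mu^{(n)}(\sigma)|_p=1$ for every $\sigma$.

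Finally I would construct the separating cylinder sets $A_n\subset\Omega_{V_n}$. For the unbounded side, taking $A_n\ni\mathbf{1}_{V_n}$ whose every other member satisfies $M_{\sigma}\geq 1$ gives, by the ultrametric inequality, $|\nu(A_n)|_p=C^{-k|V_{n-1}|}\to\infty$. The bounded side is the main obstacle: since every singleton $A_n$ has $|\mu(A_n)|_p=1$, one must enlarge $A_n$ and exploit $p$-adic cancellations. The critical flexibility afforded by the conjecture's strict inequality $|u_1^*|_p>1$ is that the $p$-adic gap between $\mathbf{1}_{V_n}$ and the rest in $\nu$ permits adding many configurations with $M_{\sigma}\geq 1$ to $A_n$ without spoiling $|\nu(A_n)|_p$. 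Writing $\mu^{(n)}(\sigma)=\exp_p(h_{\sigma})$ via Lemma \ref{epproperty}(d), I would then arrange the cardinality $|A_n|$ and the successive partial sums $\sum_{\sigma\in A_n}h_{\sigma}^{j}/j!$ to be divisible by arbitrarily high powers of $p$, forcing $|\mu(A_n)|_p\to 0$. The hard part will be this explicit combinatorial construction of $A_n$ that simultaneously kills the $\mu$-sum by a cascading cancellation while preserving the $\nu$-sum's unique dominance at $\mathbf{1}_{V_n}$.
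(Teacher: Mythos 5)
Your proposal reverses the roles that the paper assigns to the two measures, and this reversal is where the argument breaks down. In the paper's proof the conjectured solution $\mathbf{u}$ with $|u_1|_p>1$ is used for the \emph{vanishing} side: evaluating $\mu_{\mathbf{u}}^{(n)}$ on the single all-minus configuration, every edge factor is $u_2/u_1$ of norm $|u_1|_p^{-1}$, so the numerator decays like $|u_1|_p^{-|W_n|}$ and (combined with the estimate $|Z_n^{(\mathbf{u})}|_p=|b^2u_3+1|_p^{k|V_{n-1}|}$ from Proposition \ref{recZ} and the first equation of \eqref{tru}) the cylinder values tend to $0$; the \emph{blow-up} side is then supplied, on the very same all-minus cylinders, by the unbounded quasi Gibbs measures $\mu_{u_{(r)}}$ already constructed in Theorem \ref{phtr}, whose cylinder values have norm $\geq p^{k|V_{n-1}|}$. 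You instead use the conjectured measure only for blow-up (which was already available without the conjecture, via Theorem \ref{phtr}) and try to make the bounded translation-invariant Gibbs measure $\mu$ (with $\mathbf{u}^{(0)}\in\ce_p^3$) go to zero. That cannot work on single configurations, since, as you yourself note, $|\mu^{(n)}(\s)|_p=1$ for every $\s\in\Om_{V_n}$; and the paper's definition of a strong phase transition asks for a sequence $A_n\in\Om_{V_n}$, i.e. cylinders determined by single configurations, so enlarging $A_n$ to unions already steps outside the stated framework.

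Even granting unions of cylinders, the decisive step of your plan --- choosing $A_n\ni\mathbf{1}_{V_n}$ so that the $p$-adic sum $\sum_{\s\in A_n}\mu^{(n)}(\s)$ acquires arbitrarily high $p$-divisibility while every added configuration keeps $M_\s\geq1$ (to preserve $|\nu(A_n)|_p=C^{-k|V_{n-1}|}$) --- is only announced, not proved. Nothing in Lemma \ref{epproperty} guarantees that the finitely many available values $\mu^{(n)}(\s)\in\ce_p$ admit sub-sums divisible by prescribed powers of $p$; this cancellation claim is the entire content of your ``$\to 0$'' half and is left as an open combinatorial problem, arguably harder than the theorem itself. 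So the proposal has a genuine gap: the key mechanism the conjecture provides (the factor $|u_1|_p^{-1}<1$ per edge, which makes a \emph{single} cylinder measure small) is never exploited, and the vanishing side of the strong phase transition is not established.
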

\begin{proof}
Let $\textbf{u}=(u_1,u_2,u_3)\in\bq_p$ be a solution of the system
\eqref{tru} such that $|u_1|_p>1$. Due to Proposition
\ref{solutionnorm1}, we have that $|u_2|_p=|u_3|_p=1$ and
$|u_3+1|_p<1$. By means of the strong triangle inequality, it
follows from the first equation of the system \eqref{tru} that
$$
|u_1|_p=\left(\frac{\left|b^2u_3+1\right|_p}{\left|u_3+b^2\right|_p}\right)^k>\left|b^2u_3+1\right|_p^k.
$$
According to Proposition \ref{recZ}, from last inequality we
obtain that
\begin{equation}\label{normZ1}
\left|Z_{n}^{(\textbf{u})}\right|_p=\left(\frac{\left|a(b^2u_{3}+1)\right|_p}
{\left|bu_{3}\right|_p}\right)^{k|V_{n-1}|}<\left|u_1^{|V_{n-1}|}\right|_p.
\end{equation}

Let $\s\in\Om$ be a configuration such that $\s(x)=-1$ for all $x\in V\setminus\{x^{0}\}$.
It then follows from \eqref{normZ1} and \eqref{mu_u} that
$$
\left|\mu_{\textbf{u}}^{(n)}(\s)\right|_p=\frac{\left|u_1^{-|W_{n}|}\right|_p}
{\left|Z_{n}^{(u)}\right|_p}<\left|u_1^{-|V_{n}|}\right|_p.
$$
Since $|u_1|_p>1$ and $|V_n|=\frac{k(k^n-1)}{k-1}$, one has that
$$
\left|\mu_{\textbf{u}}^{(n)}(\s)\right|_p\to 0,\ \ \ \ \mbox{as }\ \ n\to\infty.
$$
Due to Theorem \ref{phtr}, we have for any $r=\overline{1,d}$ that
$$
\left|\mu_{u_{(r)}}^{(n)}(\s)\right|_p\to\infty,\ \ \ \ \mbox{as }\ \ n\to\infty.
$$
This means that there exists a strong phase transition for the $p$-adic Ising-Vannimenus model.
This completes the proof.
\end{proof}

\section{Conclusions}

In the present paper, we have considered the $p$-adic
Ising-Vanniminus  model on the Cayley tree of order $k$ ($k\geq
2$). This model is an analogue of the model studied in \cite{V}. A
new measure-theoretical approach is developed, in the $p$-adic
setting, to investigate such a model. We have constructed $p$-adic
quasi Gibbs measures via interacting functions. Such kind of
measures exist if the interacting functions satisfy
multi-dimensional recurrence equations. The existence of $p$-adic
quasi Gibbs measures are established by analyzing fixed points of
a multi-dimensional functional equation. It is shown that the
translation-invariant $p$-adic Gibbs measure is unique. This
partially confirms the conjecture formulated in \cite{MFDMAH}.
Finally, we are able to establish the existence of the phase
transition for the model depending on the order $k$ of the tree
and the prime $p$. One can observe that the values of the norms
(i.e. "absolute values" ), in the $p$-adic setting, are discrete,
therefore it is impossible to determine exact values of a critical
point for the existence of the phase transition. Moreover, in the
field of $p$-adic numbers there is no reasonable order compatible
with the usual order in the rational numbers, and this rises some
difficulties in the direction of determination of exact critical
points.
 Note that the methods used in the
paper are not valid in the real setting, since all of them based
on $p$-adic analysis and $p$-adic probability measures.

\section*{Acknowledgement} The work has been supported by the MOE grants ERGS13-024-0057 and ERGS13-025-0058.
The first named author (F.M.) acknowledges the IIUM grant EDW
B13-029-0914. Moreover, the third author (O.Kh.)  thanks Faculty
of Science of the International Islamic University Malaysia for
kind hospitality. Finally, the authors also would like to thank a
referee for his useful suggestions which allowed them to improve
the content of the paper.

\end{document}